\theoremstyle{plain}
\newtheorem{thm}{Theorem}[section]
\newtheorem{prop}[thm]{Proposition}
\newtheorem{cor}[thm]{Corollary}
\newtheorem{lem}[thm]{Lemma}
\theoremstyle{definition}
\newtheorem{exa}[thm]{Example}
\newtheorem{conj}[thm]{Conjecture}
\newtheorem{rem}[thm]{Remark}
\newtheorem{defn}[thm]{Definition}
\def\Ker{\mathop{\mathrm{Ker}}\nolimits}
\newcommand{\lra}{\longrightarrow}
\newcommand{\ra}{\rightarrow}
\newcommand{\Q}{{\Bbb Q}}
\newcommand{\Z}{{\Bbb Z}}
\newcommand{\N}{{\Bbb N}}
\newcommand{\C}{\mathbb{C}}
\newcommand{\pc}[2]{\mbox{$\begin{array}{c}
\includegraphics[scale=#2]{#1.eps}
\end{array}$}}
\begin{document}
\large
\begin{center}
{\bf\Large Meta-nilpotent quotients of mapping-torus groups}\end{center} \begin{center}{\bf\Large and two topological invariants of quadratic forms}
\end{center}
\vskip 1.5pc
\begin{center}{Takefumi Nosaka\footnote{
E-mail address: {\tt nosaka@math.titech.ac.jp}
}}\end{center}
\vskip 1pc
\begin{abstract}\baselineskip=12pt \noindent
We determine the center of a meta-nilpotent quotient of a mapping-torus group. As a corollary, we introduce two invariants, which are quadratic forms, of knots and of mapping classes.
\end{abstract}

\begin{center}
\normalsize
\baselineskip=17pt
{\bf Keywords} \\
\ \ \ Meta-nilpotent groups, knots, Alexander polynomial, Johnson homomorphism \ \ \
\end{center}
\large
\baselineskip=16pt

\section{Introduction}
For a free group $F$, let $F_1$ be $F$, and let $F_k$ be the commutator subgroup $[F,F_{k-1}]$ inductively. We suppose an automorphism $\tau : F \ra F$, which admits the semi-direct product $ F \rtimes \Z$. By meta-nilpotent quotient, we mean the quotient $ F/F_k \rtimes \Z$. This paper mainly discusses the case $k=3$. 

In some cases, the group $ F \rtimes \Z$ is topologically constructed as follows.
Let $\mathcal{M}_{g,r}$ be the mapping class group of the oriented compact surface $\Sigma_{g,r }$ with $r$-boundaries, which fix the boundary pointwise. 
Given a mapping class $f \in \mathcal{M}_{g,r} $, {\it the mapping-torus} $T_f$ is the quotient space of $\Sigma_{g,r} \times [0,1 ] $ subject to $ (x,0) \sim (f(x),1)$ for any $x \in \Sigma_{g,r}$; then, the fundamental group $ \pi_1( T_f)$ is the semi-direct product $ \pi_1( \Sigma_{g,r}) \rtimes_f \Z $. 
Since the Dehn-Nielsen theorem (see, e.g., \cite{FM}) claims a natural injection $ \mathcal{DH}:   \mathcal{M}_{g,1} \hookrightarrow \mathrm{Aut}(\pi_1(\Sigma_{g,1} ))$, 
the study of the difference between the image $\mathrm{Im} \mathcal{DH} $ and $ \mathrm{Aut}(\pi_1(\Sigma_{g,1} ))  $ is interesting and has been investigated in some ways (e.g., in terms of $ F \rtimes \Z$); see, e.g., \cite{FM,Joh,Ki,Sato1} and references therein. 
.


In this paper, we give an observation of the difference in terms of centers. 
However, some groups appearing in the study of $\mathcal{M}_{g,r}$ have trivial centers in many cases. For example, $ \mathcal{M}_{g,0} $ has no center if $g \geq 3$ (see \cite[\S 3.4]{FM}), and $ \pi_1( \Sigma_{g,r}) \rtimes \Z $ and its metabelianization are mostly centerless. 
It is often that, if a mapping class group $ \mathcal{M}_{g,r} $ with $r>1$ has non-trivial center, 
the central elements can be described in an easy way. 
For these reasons, few invariants of mapping classes and mapping-tori are studied so far by using centers.

In contrast, this paper focuses on the meta-nilpotent quotient $F/F_3 \rtimes \Z $ and determines the center (Theorem \ref{gggg2}). Moreover, if $F \rtimes \Z $ is a mapping-torus group $ \pi_1( \Sigma_{g,1}) \rtimes_f \Z $, we show (Corollary \ref{gggg23}) that the center of $F/F_3 \rtimes \Z $ always contains $\Z^g$; further, we explicitly express the central elements of the summand $\Z^g$ in some cases (Proposition \ref{gggg4}). Here, the point is to express explicitly the action of $ \Z$ on $F/F_3$ (Proposition \ref{gggg3}).

In application, we construct two invariants which are quadratic forms and are valued in the centers of the groups $F/F_3 \rtimes \Z $. First, we define a quadratic form from a mapping class $[f] \in \mathcal{M}_{g,1}$; see Definition \ref{b3440}. Roughly speaking, this quadratic form is defined by a correspondence from the homology $ H_1(F/F_3 \rtimes \Z ;\Z[t^{\pm 1}] )$ to a boundary element of $ \pi_1( \Sigma_{g,1}) \rtimes_f \Z $. On the other hand, as an analogy, we give a knot invariant of a quadratic form over $\Q$ in terms of evaluating longitudes from $F/F_3 \rtimes \Z $ (Definition \ref{bbds3}). We also contrast our invariant with the Blanchfield pairing \cite{Bla}, which is a classical invariant of hermite bilinear form (see Proposition \ref{lem0008}).

This paper is organized as follows. Section \ref{S54} states the theorems, and Sections 3 and 4 describe the two invariants of quadratic forms. Sections \ref{s23} and \ref{k2k2} give the proofs of the statements in Sections 3--4.

\section{The main theorems}
The purpose of this section is to state Theorems \ref{gggg2} and \ref{gggg4}.
\label{S54}
\subsection{Center of the meta-nilpotent quotient of the free group}
Consider the situation that the free group, $F$, of rank $m$ is acted on by $\Z= \{ \tau^{\pm n } \}_{n\in \Z} $. Then, we have the semi-direct product $ F/F_k \rtimes \Z $ for any $k$. In this paper, we will focus on the case $k=3$ and analyze the center of $ F/F_3 \rtimes \Z $ (see Theorem \ref{gggg2}).

In order to state Theorem \ref{gggg2}, we regard the quotient action $ \tau : F/F_2 \ra F/F_2 $ as an $(m\times m)$ matrix over $\Z $, since $F/F_2 \cong \Z^m $.
The characteristic polynomial of $\tau$ is called {\it the Alexander polynomial (of $\tau$)} and is denoted by $\Delta_{\tau}(t ) \in \Q[t^{\pm 1}]$.
Fixing an inclusion $ \Q \hookrightarrow \mathbb{C}$, let $ \alpha_1, \dots, \alpha_k \in\mathbb{C}$ be the distinct roots of $\Delta_{\tau}(t)$.
We further consider the complexification $F/F_2 \otimes \C $ and regard it as a $\C[ t^{\pm 1} ]$-module; elementary divisor theory gives a $\C[ t^{\pm 1} ]$-isomorphism
\begin{equation}\label{a5} F/F_2 \otimes \C \cong \bigoplus_{i=1}^k \Bigl( \frac{\C[t^{\pm 1} ]}{(t- \alpha_i )^{n_{1}^{(i)} } } \oplus \frac{\C[t^{\pm 1} ]}{(t- \alpha_i )^{n_{2}^{(i)} } } \oplus \cdots \oplus \frac{\C[t^{\pm 1} ]}{(t- \alpha_i )^{n_{\ell_i}^{(i)} } } \Bigr) \end{equation}
for some natural numbers $\ell_1,\dots, \ell_k $ and $ n_{1}^{(i)},\dots ,n_{\ell_i }^{(i)} $ with
$ 0< n_{1}^{(i)} \leq n_{2}^{(i)} \leq \cdots \leq n_{\ell_i }^{(i)} $.

We determine the center of $ F/F_3 \rtimes \Z$ as follows:
\begin{thm}\label{gggg2}
Let $F$ be the free group of rank $m < \infty $. Suppose $\Delta_{\tau}(\pm 1) \neq 0.$
Then, the center of $F/F_3 \rtimes \Z $ is a free $\Z$-module, and the rank is equal to the following sum:
\begin{equation}\label{a77} \sum_{ (i,j) \in \{ (i,j ) \in \N^2 | \ \alpha_i \alpha_j =1 , \ i< j \} } \sum_{ u=1}^{\ell_i }\sum_{ v=1}^{\ell_j } \mathrm{min} ( n_u^{ (i)} , n_v^{ (j)}).
\end{equation}
\end{thm}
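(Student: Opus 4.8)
The plan is to describe $Z(F/F_3\rtimes\Z)$ explicitly and then reduce the rank count to a linear-algebra statement about $\wedge^{2}\tau$. First I would record the structure of $N:=F/F_3$. Since $F$ is free, the quotient $F_2/F_3$ is canonically the degree-two part of the free Lie ring on $H:=F/F_2\cong\Z^m$, hence $F_2/F_3\cong\wedge^{2}H$, and $N$ sits in a central extension
\begin{equation*}
1 \lra \wedge^{2}H \lra N \lra H \lra 1
\end{equation*}
on which $\tau$ acts compatibly, inducing $\tau$ on $H$ and $\wedge^{2}\tau$ on $\wedge^{2}H$. Writing a general element of $G:=N\rtimes\Z$ as $(n,t^{a})$ and expanding the semidirect-product multiplication, one checks directly that $(n,t^{a})$ is central if and only if $\tau(n)=n$ and $\tau^{a}$ equals the inner automorphism $x\mapsto n^{-1}xn$ of $N$.

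Next I would use $\Delta_{\tau}(1)\neq0$ to cut this down. The hypothesis says $\tau-\id$ is invertible on $H\otimes\Q$, so $H^{\tau}=0$; hence the image of any central $n$ in $H$ vanishes and $n\in\wedge^{2}H$, which is the center of $N$. An inner automorphism by a central element is trivial, so the two conditions collapse to $\tau^{a}=\id$ on $N$ together with $n\in(\wedge^{2}H)^{\tau}=\Ker(\wedge^{2}\tau-\id)$. Because the acting group is genuinely $\Z=\{\tau^{\pm n}\}$, so that $\tau$ has infinite order on $N$, the relation $\tau^{a}=\id$ forces $a=0$, and therefore
\begin{equation*}
Z(G)\;=\;\Ker\bigl(\wedge^{2}\tau-\id\colon\wedge^{2}H\to\wedge^{2}H\bigr)\times\{1\}.
\end{equation*}
This is a subgroup of the free abelian group $\wedge^{2}H$, hence free abelian, which proves the first assertion; its rank equals $\dim_{\C}\Ker(\wedge^{2}\tau-\id)$ after tensoring with $\C$.

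Finally I would compute this dimension from the Jordan data in \eqref{a5}. Decomposing into generalized eigenspaces gives $\wedge^{2}(H\otimes\C)=\bigoplus_{i<j}H_{\alpha_i}\otimes H_{\alpha_j}\oplus\bigoplus_{i}\wedge^{2}H_{\alpha_i}$, and the eigenvalue $1$ of $\wedge^{2}\tau$ can only occur on a summand whose eigenvalues are products $\alpha_i\alpha_j$. Here $\Delta_{\tau}(\pm1)\neq0$ rules out $\alpha_i=\pm1$, which eliminates every self-paired contribution ($\alpha_i^{2}=1$) coming from the terms $\wedge^{2}H_{\alpha_i}$ and leaves only the cross terms with $\alpha_i\alpha_j=1$, $i<j$. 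On each block $J(\alpha_i,p)\otimes J(\alpha_j,q)$ with $\alpha_i\alpha_j=1$ I would invoke the standard fact that a tensor product of two Jordan blocks with reciprocal eigenvalues has exactly $\min(p,q)$ Jordan blocks, so its contribution to $\dim\Ker(\wedge^{2}\tau-\id)$ is $\mathrm{min}(n_u^{(i)},n_v^{(j)})$. Summing over $u,v$ and over all admissible pairs yields \eqref{a77}, and since the $\Z$-rank of the kernel of an integer matrix equals its $\C$-dimension, this is $\rank Z(G)$.

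The main obstacle is this last paragraph: correctly pinning down the Jordan structure of $\wedge^{2}\tau$, in particular the $\min(p,q)$ count for a tensor product of two Jordan blocks of reciprocal eigenvalues, and verifying that $\Delta_{\tau}(\pm1)\neq0$ removes precisely the diagonal and self-paired contributions so that the surviving count matches the stated sum over $i<j$. The earlier reduction steps are formal once the identification $F_2/F_3\cong\wedge^{2}H$ and the faithfulness of the $\Z$-action are in hand.
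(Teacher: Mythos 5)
Your proposal is correct and follows essentially the same route as the paper: you reduce the center to $\Ker(\wedge^{2}\tau-\mathrm{id})$ on $F_2/F_3\cong\wedge^{2}(F/F_2)$ (the paper does this via Lemma \ref{lem24} together with the $\tau$-equivariant isomorphism $\psi$ extracted from Proposition \ref{gggg}), and then count the eigenvalue-$1$ Jordan blocks of $\wedge^{2}\tau$ using exactly the decomposition of $J_{\lambda}(p)\otimes J_{\lambda^{-1}}(q)$ into $\min(p,q)$ blocks that the paper quotes from \cite{MV} as \eqref{pp9}, with $\Delta_{\tau}(\pm1)\neq0$ killing the $\wedge^{2}H_{\alpha_i}$ summands in both arguments. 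The only divergence is cosmetic: your direct expansion of the semidirect-product multiplication, including the explicit observation that the $\Z$-component of a central element vanishes because $\tau$ has infinite order on $F/F_3$, is precisely the content the paper delegates to Lemma \ref{lem24} (and is, if anything, slightly more careful on that point).
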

The proof appears in \S \ref{s23}. Here, as examples, we give corollaries in special situations, which will be used in Corollary \ref{gggg45533} and \S\S \ref{dsa}--\ref{dsa2}. 
\begin{cor}\label{gggg23}
With the notation in Theorem \ref{gggg2}, we now assume $\ell_1 = \cdots = \ell_k=1$ and
$n_1^{(1)} = \cdots = n_1^{(k)}=1$.
Then, the rank of the center of $F/F_3 \rtimes \Z $ is equal to the cardinality $\#
\{ (i,j ) \in \N^2 | \ \alpha_i \alpha_j =1 \} .$
\end{cor}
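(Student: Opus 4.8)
The plan is to obtain Corollary \ref{gggg23} as an immediate specialization of Theorem \ref{gggg2}, so that the whole argument amounts to simplifying the count (\ref{a77}) under the two extra hypotheses. First I would unwind what $\ell_1 = \cdots = \ell_k = 1$ and $n_1^{(1)} = \cdots = n_1^{(k)} = 1$ say about the elementary-divisor decomposition (\ref{a5}): for each root $\alpha_i$ the $i$-th summand degenerates to the single cyclic module $\C[t^{\pm 1}]/(t - \alpha_i)$, so that $m = k$, the action $\tau$ on $F/F_2 \otimes \C$ is diagonalizable with the $k$ distinct eigenvalues $\alpha_1, \dots, \alpha_k$, and in (\ref{a77}) the indices $u$ and $v$ each run over the single value $1$ while every invariant $n_u^{(i)}$ equals $1$.

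With this in hand I would simplify the summand of (\ref{a77}) term by term. Since $n_1^{(i)} = n_1^{(j)} = 1$, each $\min(n_u^{(i)}, n_v^{(j)})$ equals $\min(1,1) = 1$, and because $\ell_i = \ell_j = 1$ the inner double sum over $u,v$ consists of exactly one term. Hence (\ref{a77}) collapses to $\sum_{i<j,\ \alpha_i\alpha_j = 1} 1$, that is, to the number of index pairs $(i,j)$ with $i<j$ and $\alpha_i\alpha_j = 1$.

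Finally I would identify this number with the cardinality $\#\{(i,j) \in \N^2 \mid \alpha_i\alpha_j = 1\}$ appearing in the statement. The key structural inputs are that the roots $\alpha_1, \dots, \alpha_k$ are distinct and that the standing hypothesis $\Delta_{\tau}(\pm 1) \neq 0$, inherited from Theorem \ref{gggg2}, forces $\alpha_i \neq \pm 1$, hence $\alpha_i^2 \neq 1$ for every $i$; this rules out any diagonal contribution $i = j$. Together with the evident symmetry of the relation $\alpha_i\alpha_j = 1$ under interchanging $i$ and $j$, this reduces the identification to a routine comparison of the two index sets, after which the asserted equality of cardinalities follows.

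I expect no genuine obstacle here, since the corollary is a pure specialization of Theorem \ref{gggg2}; the only point demanding care is the bookkeeping — tracking the collapse of the $u,v$ ranges and invoking $\Delta_{\tau}(\pm 1) \neq 0$ to exclude $\alpha_i = \pm 1$, so that the reciprocal-root relation is diagonal-free and the count produced by (\ref{a77}) genuinely matches the cardinality in the statement.
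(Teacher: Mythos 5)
Your overall route---specializing Theorem \ref{gggg2} and collapsing the sum \eqref{a77}---is exactly what the paper intends (it states this corollary without any written proof, as an immediate consequence of the theorem), and your first two steps are correct: with $\ell_1 = \cdots = \ell_k = 1$ and all $n_1^{(i)} = 1$, each inner double sum over $u,v$ consists of the single term $\min(1,1)=1$, so \eqref{a77} collapses to $\#\{(i,j)\in\N^2 \mid \alpha_i\alpha_j = 1,\ i<j\}$.

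The gap is in your final identification step. You invoke the symmetry of the relation $\alpha_i\alpha_j=1$ under interchange of $i$ and $j$ to conclude that this restricted count equals the unrestricted cardinality $\#\{(i,j)\in\N^2 \mid \alpha_i\alpha_j=1\}$, but the symmetry yields precisely the opposite: since $\Delta_{\tau}(\pm 1)\neq 0$ excludes diagonal pairs (as you correctly observe), every reciprocal pair of roots contributes \emph{two} ordered pairs $(i,j)$ and $(j,i)$ to the unrestricted set but only \emph{one} pair with $i<j$, so the unrestricted cardinality is twice the value of \eqref{a77}, not equal to it. Concretely, take $k=2$ with $\alpha_2=\alpha_1^{-1}$ and $\alpha_1\neq\pm 1$: formula \eqref{a77} gives $1$, while $\#\{(1,2),(2,1)\}=2$. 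The resolution is that the index set in the corollary must be read with the same convention $i<j$ as in \eqref{a77}; the paper's wording is loose here, and this reading is forced by the companion corollary on reciprocal spectra, whose proof computes the rank as $\sum_{i=1}^{g} n_1^{(i)} = m/2 = g$ rather than $2g$. As written, your claim that ``the asserted equality of cardinalities follows'' from a routine comparison asserts something false; you should either flag the factor of $2$ and adopt the $i<j$ reading of the statement, or your derivation of the literal statement fails at exactly this point.
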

\begin{cor}\label{gggg23}
With the notation in Theorem \ref{gggg2}, 
assume the existence of $g \in \mathbb{N}$ with $k=2g$  and the reciprocity, i.e.,
$ \alpha_i= \alpha_{2g -i}^{-1}$ and $\ell_{i }=\ell_{2g-i} $ and $ n_u^{ (i)}= n_u^{(2g-i) }$ for all $i $ and $u \in \mathbb{N}.$
Then, the rank of the center of $F/F_3 \rtimes \Z $ is larger than or equal to $m/2$.

Furthermore, if every $\ell_i$ is equal to 1,
then the center of $F/F_3 \rtimes \Z $ is isomorphic to $\Z^{m/2}=\Z^g. $
\end{cor}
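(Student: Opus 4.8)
The plan is to read off both assertions directly from the rank formula (\ref{a77}) of Theorem \ref{gggg2}, which already guarantees that the center is a free $\Z$-module; only its rank remains to be evaluated. First I would analyze the index set $\{(i,j)\in\N^2 \mid \alpha_i\alpha_j=1\}$, which is controlled by the involution sending each root to its reciprocal. The standing hypothesis $\Delta_{\tau}(\pm 1)\neq 0$ forbids $\pm 1$ from being a root, so no $\alpha_i$ satisfies $\alpha_i^2=1$; hence the reciprocal partner of $\alpha_i$ is a genuinely \emph{distinct} root $\alpha_{2g-i}$, and the reciprocal pairing is a fixed-point-free involution of the $2g$ distinct roots. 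Consequently the set $\{(i,j)\mid \alpha_i\alpha_j=1,\ i<j\}$ has exactly $g$ elements, one representative $\{i,2g-i\}$ per reciprocal pair.

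Next I would simplify the inner double sum attached to each pair. For $j=2g-i$, the reciprocity $\ell_i=\ell_j$ and $n_u^{(i)}=n_u^{(j)}$ allows me to abbreviate $\ell:=\ell_i$ and $n_u:=n_u^{(i)}$ and estimate
\begin{equation}
\sum_{u=1}^{\ell_i}\sum_{v=1}^{\ell_j}\min\bigl(n_u^{(i)},n_v^{(j)}\bigr)=\sum_{u=1}^{\ell}\sum_{v=1}^{\ell}\min(n_u,n_v)\ \geq\ \sum_{u=1}^{\ell}\min(n_u,n_u)=\sum_{u=1}^{\ell}n_u ,
\end{equation}
where the inequality simply discards the non-negative off-diagonal terms. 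Since the $\C$-dimension of the module in (\ref{a5}) equals $\dim_{\C}(F/F_2\otimes\C)=m$, we have $\sum_i\sum_{u=1}^{\ell_i}n_u^{(i)}=m$; the reciprocity makes the two partners of each pair contribute equally, so summing $\sum_u n_u^{(i)}$ over the $g$ pairs gives exactly $\tfrac12 m$. Combining this with the displayed inequality and Theorem \ref{gggg2} yields the first claim, that the rank of the center is at least $m/2$.

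For the final assertion I would observe that when every $\ell_i$ equals $1$ the inner double sum collapses to its single diagonal term $\min(n_1^{(i)},n_1^{(j)})=n_1^{(i)}$, so the inequality above becomes an equality and the rank is exactly $\sum_{\mathrm{pairs}}n_1^{(i)}=m/2$. With the freeness supplied by Theorem \ref{gggg2}, the center is then $\Z^{m/2}$; in the situation of interest, where the $k=2g$ roots of the degree-$m$ polynomial $\Delta_{\tau}$ are simple (equivalently $m=2g$), this is precisely the asserted $\Z^{m/2}=\Z^g$.

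The step I expect to be the main obstacle is not computational but a matter of careful bookkeeping: one must confirm that each root's reciprocal partner is a different index (this is exactly where $\Delta_{\tau}(\pm 1)\neq 0$ is used) and that every unordered reciprocal pair is counted once under the constraint $i<j$, so that the factor $\tfrac12$ relating $\sum_{\mathrm{pairs}}$ to $\sum_{\text{all }i}$ is applied correctly and the passage from the lower bound to the exact value $m/2$ in the case $\ell_i=1$ is justified.
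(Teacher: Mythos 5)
Your proposal is correct and follows essentially the same route as the paper: evaluate the rank formula \eqref{a77} of Theorem \ref{gggg2} under the reciprocal pairing of the roots, bound the double sum below by its diagonal terms, and compare with $\sum_{i}\sum_{u} n_u^{(i)} = m$ coming from \eqref{a5}. If anything, your bookkeeping is more careful than the paper's one-line argument (which asserts \eqref{a77} is ``smaller than'' $\sum_{i=1}^{g}\ell_i = m/2$, a misstatement of the intended lower bound that your diagonal estimate supplies correctly), and your observation that the final identification $\Z^{m/2}=\Z^{g}$ presupposes simple roots, i.e.\ $m=2g$, is an accurate reading of the implicit hypothesis.
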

\begin{proof}
By reciprocity, the sum \eqref{a77} is smaller than the sum $ \sum_{ i=1} ^{g} \ell_i$ which is equal to $ m/2 .$ Regarding the latter part,
the sum \eqref{a77} is $ \sum_{ i=1} ^{g} n_1^{(i)}$, which is equal to $ m /2$ by \eqref{a5}.
\end{proof}

\subsection{Central elements in rationalization of the $\Z$-action.}
\label{s22}
Now let us study a rationalization of the group $ F/F_3 \rtimes \Z $ and explicitly describe some central elements of $ F/F_3 \rtimes \Z $ (Theorem \ref{gggg4}).

First, we describe the group structure of $F/F_3$. Since $F/F_2 \cong \Z^{m} $ and $F_2/F_3 \cong \Z^{m(m-1)/2 }$, we choose a basis $\{ e_{i}\}_{i=1}^{m} $ of $F /F_2$ and a basis $\{ e_{i j }\}_{ 1 \leq i <j \leq m} $ of $F_2 /F_3$. As an analogy to the exterior product, we define a bilinear map 
$$ \sqcap: F/F_2 \times F/F_2 \ra F_2/F_3$$
 by setting 
$$e_{i} \sqcap e_j =e_{ij} \ \ \ \mathrm{and} \ \ \ e_{j} \sqcap e_i = e_i \sqcap e_i = 0 \ \ \mathrm{for }  i< j.$$
Accordingly, we have a group operation on $F/F_2 \times F_2 /F_3$ given by
\begin{equation}\label{hhh} (a , \alpha ) \cdot (b , \beta ) :=(a +b, \ a \sqcap b + \alpha + \beta), \ \ \
\end{equation}
for $a,b \in F/F_2, \ \alpha ,\beta \in F_2/F_3$. It can be seen that this group is isomorphic to $F/F_3$.

Next, let us examine the rationalization. Consider the $\Q$-extension of $\sqcap$, i.e., $ \sqcap: F/F_2 \otimes \Q \times F/F_2 \otimes \Q \ra F_2/F_3 \otimes \Q $. Then, the operation \eqref{hhh} makes $ F/F_2 \otimes \Q \times F_2/F_3 \otimes \Q $ into a group. We denote the group by $ F/F_3 \otimes \Q$, and we later address the group structure of $ F/F_3 \otimes \Q$ in details (see Proposition \ref{gggg3}).

We will analyze the center of $(F/F_3 \otimes \Q)\rtimes \Z $ in Theorem \ref{gggg4}.
For this,
the following easy lemma implies that
we may focus on the elements in $Z(e ,1) \cap ( F_2 /F_3 \otimes \Q \times \{ 0\})$.
\begin{lem}\label{lem24}
Let $\Z$ act on a group $G$ with unit $e$, and consider the semi-direct product $G \rtimes \Z$. Let $Z(G) \subset G$ be the center of $G$ and $Z(e,1)$ be the centralizer subgroup of $(e,1) \in G \rtimes \Z$. Then, the center of $G \rtimes \Z $ is equal to the intersection $Z(e,1) \cap (Z(G) \times \{ 0\})$.
\end{lem}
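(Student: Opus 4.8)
The plan is to reduce the computation of the center to commuting with a small generating set. The group $G \rtimes \Z$ is generated by the subgroup $G \times \{0\}$ together with the single element $(e,1)$, since $(g,n) = (g,0)\cdot (e,1)^{n}$. Consequently an element $(g,n)$ lies in the center of $G \rtimes \Z$ if and only if it commutes with every $(h,0)$, $h\in G$, and with $(e,1)$. I would record the semidirect multiplication as $(g,n)\cdot(h,m) = (g\,\tau^{n}(h),\, n+m)$, where $\tau$ denotes the automorphism of $G$ by which the generator $1\in\Z$ acts.

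For the inclusion of the center into $Z(e,1)\cap(Z(G)\times\{0\})$, I would first note that commuting with $(e,1)$ is exactly the condition $(g,n)\in Z(e,1)$; expanding it gives $\tau(g)=g$. Next, commuting with each $(h,0)$ yields $(g\,\tau^{n}(h),n) = (hg,n)$, whose $\Z$-coordinates agree automatically, while the $G$-coordinate gives the relation $g\,\tau^{n}(h) = hg$ for all $h\in G$, i.e. $\tau^{n}(h) = g^{-1}hg$. The main step is then to extract $n=0$ and $g\in Z(G)$ from these data: once $n=0$ the relation reads $gh = hg$ for all $h$, so $g\in Z(G)$, and together with $\tau(g)=g$ this places $(g,n)=(g,0)$ in $Z(e,1)\cap(Z(G)\times\{0\})$.

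The reverse inclusion is a direct verification. Given $z\in Z(G)$ with $(z,0)\in Z(e,1)$, i.e. $\tau(z)=z$, I would check that $(z,0)$ commutes with an arbitrary $(h,m)$. Indeed $(z,0)(h,m) = (zh,m)$ and $(h,m)(z,0) = (h\,\tau^{m}(z),m) = (hz,m)$, which coincide because $\tau^{m}(z)=z$ and $z$ is central in $G$; hence $(z,0)$ is central. Combined with the first inclusion this gives the claimed equality.

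The point that requires the most care, and the one I would isolate as the crux, is the vanishing of the $\Z$-coordinate: a priori the relation $\tau^{n}(h)=g^{-1}hg$ only asserts that $\tau^{n}$ equals the inner automorphism $\mathrm{conj}_{g^{-1}}$ by the $\tau$-fixed element $g$. I would rule out $n\neq 0$ using that in the situation at hand the power $\tau^{n}$ cannot be realized as conjugation by a fixed element unless $n=0$; once this is in place, the remaining steps are routine coordinate bookkeeping, which is why the statement is an \emph{easy} lemma.
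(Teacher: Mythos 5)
Your coordinate bookkeeping is correct: the multiplication rule, the derivation that a central $(g,n)$ must satisfy $\tau(g)=g$ and $\tau^{n}(h)=g^{-1}hg$ for all $h\in G$, and the verification of the reverse inclusion (which is exactly the paper's ``obviously central'' direction) are all fine. The problem is the step you yourself isolate as the crux. You defer the vanishing of the $\Z$-coordinate to the claim that ``in the situation at hand the power $\tau^{n}$ cannot be realized as conjugation by a fixed element unless $n=0$,'' but the lemma is stated for an \emph{arbitrary} $\Z$-action on an \emph{arbitrary} group, so there is no ``situation at hand'' to invoke, and in that generality the claim is false. Take $\tau=\mathrm{id}$ and $G=\Z$: then $G\rtimes\Z=\Z^{2}$ is its own center, while $Z(e,1)\cap(Z(G)\times\{0\})=\Z\times\{0\}$; already $(e,1)$ is a central element with nonzero $\Z$-coordinate. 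So the forward inclusion cannot be completed by any argument. What your analysis actually establishes is that the center is exactly $\{(g,n)\ :\ \tau(g)=g,\ \tau^{n}=\mathrm{conj}_{g^{-1}}\}$, and the asserted equality is \emph{equivalent} to the additional hypothesis that no nonzero power of $\tau$ becomes inner via a $\tau$-fixed element --- a hypothesis, not a provable step.

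For comparison, the paper's own proof is two sentences and silently skips the identical point: from ``any central $C$ commutes with $(e,1)$ and with $(x,0)$ for $x\in Z(G)$'' it concludes $C\in Z(e,1)\cap(Z(G)\times\{0\})$, yet those commutations only give $\tau(g)=g$ and that $\tau^{n}$ fixes $Z(G)$ pointwise, neither of which forces $n=0$ or $g\in Z(G)$. This is precisely the non sequitur you flagged, so your attempt is in fact more careful than the paper's argument, and the gap lies in the lemma itself. Nor is the defect purely formal in the paper's applications: even under the hypothesis $\Delta_{\tau}(\pm1)\neq0$ of Theorem \ref{gggg2}, the order-three automorphism of the rank-two free group given by $x\mapsto y$, $y\mapsto y^{-1}x^{-1}$ has $\Delta_{\tau}(t)=t^{2}+t+1$, so $\Delta_{\tau}(\pm 1)\neq 0$, yet $(e,3)$ is central in $F/F_{3}\rtimes\Z$. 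The correct repair is to add a hypothesis that delivers your crux --- for instance, that no eigenvalue of $\tau$ on $F/F_{2}\otimes\C$ is a root of unity, which rules out $\tau^{n}$ inner for $n\neq0$ because inner automorphisms act trivially on $F/F_{2}$ --- rather than to hunt for the missing argument, which does not exist at the stated level of generality.
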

\begin{proof}
Any central element $C \in G \rtimes \Z$ commutes with $ (e,1) $ and $ (x,0)$ for any $x \in Z(G)$. Therefore, $C \in Z(e,1) \cap ( Z(G) \times \{ 0\})$. Conversely, by the definition of $G \rtimes \Z$, any $C \in Z(e,1) \cap (Z(G) \times \{ 0\})$ is obviously central.
\end{proof}

Now, we will give an expression of central elements, when $m=2g$ for some $g \in \mathbb{N}$. Here, we further assume that
$k=1$, i.e., $ F/F_2 \otimes \Q \cong \Q[t ]/{f_1}(t)$, and 
$  f_1(t^{-1})= t^{-\mathrm{deg}f_1} f_1(t) $. Then, if we expand $f_1$ as $ \sum_{i=0}^{2g} a_it^{i}$ with $a_0=a_{2g}=1$, we have $ a_j=a_{2g- j}$ for any $0< j\leq g$. For any $i<j \leq 2g $ and $\ell \leq g$, let us introduce $d_{i,j }^{(\ell)}\in \Z$ as follows.
Let $ d_{1 ,j }^{(\ell)}\in \Z $ be the Kronecker delta $ \delta_{j-1,\ell}$ for $ j \leq g $, and $ d_{1, j }^{(\ell)}$ be $ \delta_{2g-j+1 ,\ell}$ for $ j > g $. By double induction on $i$ and $j $, we define $d_{i,j }^{(\ell)}\in \Z$ by using the formulas
\begin{equation}\label{ooo}d_{i,j}^{(\ell)} := \begin{cases}
d_{i-1,j-1 }^{(\ell)}-a_{j-1} \delta_{i-1,\ell}+ a_{j-2} \delta_{j-1,\ell}, & \mathrm{if} \ \ i \leq g, j\leq g, \\
d_{i-1,j-1 }^{(\ell)}-a_{j-1} \delta_{i-1,2g-\ell}+ a_{j-2} \delta_{j-1,\ell} ,& \mathrm{if} \ \ i > g, j\leq g ,\\
d_{i-1,j-1 }^{(\ell)}-a_{j-1} \delta_{i-1,\ell}+ a_{j-2} \delta_{j-1,2g-\ell} ,& \mathrm{if} \ \ i \leq g, j>g, \\
d_{i-1,j-1 }^{(\ell)}-a_{j-1} \delta_{i-1,2g-\ell}+ a_{j-2} \delta_{j-1,2g-\ell}, & \mathrm{if} \ \ i > g, j> g. \\
\end{cases} \end{equation}
\begin{thm}\label{gggg4}
Let $m=2g$ for some $g \in \mathbb{N}$. Suppose all $\ell_i=1$ in \eqref{a5} and the reciprocity $ f_1(t^{-1})= t^{-\mathrm{deg}f_1} f_1(t)$ (possibly $f_1(\pm 1) =0$) as above. Then, the element $C_\ell$ defined to be $ ( \sum_{i<j} d_{i,j}^{(\ell)} e_{i,j},0) \in( F_2/F_3 \otimes \Q) \rtimes \Z $ satisfies $\tau(C_\ell)=C_\ell$. In particular, $C_\ell$ lies in the center of $ ( F_2/F_3 \otimes \Q) \rtimes \Z$.

Moreover, the center is a $\Q$-vector space of dimension $g$, and spanned by the elements $C_1, \dots, C_g$. 
\end{thm}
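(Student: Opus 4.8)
The plan is to reduce, via Lemma \ref{lem24}, the computation of the center to a fixed-point problem for $\wedge^2\tau$, and then to settle that problem by an eigenvalue count governed by reciprocity. Since $F/F_3\otimes\Q$ is nilpotent of class two, its center is exactly $F_2/F_3\otimes\Q$: from \eqref{hhh} an element $(a,\alpha)$ is central iff $a\sqcap b=b\sqcap a$ for every $b$, and the difference $a\sqcap b-b\sqcap a$ is the genuine alternating form $\sum_{i<j}(a_ib_j-a_jb_i)e_{ij}$, which vanishes for all $b$ only when $a=0$. Now $F_2/F_3\otimes\Q$ is canonically $\wedge^2 V$ for $V:=F/F_2\otimes\Q$, with $e_{ij}$ corresponding to $e_i\wedge e_j$, and since $\tau$ is an automorphism it respects commutators and hence acts on $F_2/F_3\otimes\Q$ as $\wedge^2(\tau|_V)$. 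Writing $Z(G)=\wedge^2 V$ in Lemma \ref{lem24} and noting that $(\omega,0)$ commutes with $(e,1)$ precisely when $\tau(\omega)=\omega$, I obtain that the center of $(F/F_3\otimes\Q)\rtimes\Z$ is the fixed subspace $(\wedge^2 V)^{\tau}$. Thus the statement reduces to three claims: each $C_\ell$ lies in $(\wedge^2 V)^\tau$, the $C_\ell$ are independent, and $\dim_\Q(\wedge^2 V)^\tau=g$.

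Under the hypothesis $k=1$ the module $V$ is cyclic, $V\cong\Q[t]/(f_1)$, and in the cyclic basis $e_i=t^{i-1}$ the operator $\tau$ is the companion matrix: $\tau(e_i)=e_{i+1}$ for $i<2g$ and $\tau(e_{2g})=-\sum_{j=1}^{2g}a_{j-1}e_j$. Hence $\wedge^2\tau(e_i\wedge e_j)=e_{i+1}\wedge e_{j+1}$ when $j<2g$, while for $j=2g$ the wrap-around $\tau(e_{2g})$ expands through the coefficients $a_{j-1}$. Matching the coefficient of each $e_{pq}$ in the equation $\wedge^2\tau(C_\ell)=C_\ell$ then encodes exactly the four-case recursion \eqref{ooo}, in which the split at the indices $g$ and the entries $\delta_{\bullet,2g-\ell}$ reflect the palindromic identity $a_j=a_{2g-j}$; so the first step is the direct, if bookkeeping-heavy, verification that \eqref{ooo} together with the prescribed first-row data is equivalent to $\wedge^2\tau(C_\ell)=C_\ell$, giving centrality. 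Linear independence is then immediate from the initial conditions: the coefficient of $e_{1,\ell+1}$ in $C_\ell$ is $1$ while the corresponding first-row coefficient of $C_{\ell'}$ vanishes for $\ell'\neq\ell$, so $\dim_\Q(\wedge^2 V)^\tau\ge g$.

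It remains to prove the matching bound $\dim_\Q(\wedge^2 V)^\tau\le g$, which carries the real content. I would extend scalars to $\C$ and decompose $V\otimes\C=\bigoplus_{i=1}^k W_i$ into generalized eigenspaces; since every $\ell_i=1$, each $W_i$ is a single Jordan block of size $m_i$ for the root $\alpha_i$, with $\sum_i m_i=2g$. Then $\wedge^2(V\otimes\C)=\bigoplus_{i<j}(W_i\otimes W_j)\oplus\bigoplus_i\wedge^2 W_i$, and I compute $\dim\ker(\wedge^2\tau-\id)$ summand by summand. A summand $W_i\otimes W_j$ with $i\neq j$ contributes only when $\alpha_i\alpha_j=1$, i.e. for a reciprocal pair $\{\alpha,\alpha^{-1}\}$ with $\alpha\neq\pm1$ (so $j=i'$ is uniquely determined and $m_{i'}=m_i$ by reciprocity); there $\wedge^2\tau$ is unipotent and the Clebsch–Gordan decomposition of a tensor product of two unipotent Jordan blocks gives $\dim\ker(\wedge^2\tau-\id)=\min(m_i,m_{i'})=m_i$. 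A summand $\wedge^2 W_i$ contributes only for $\alpha_i=\pm1$, where $\wedge^2(\tau|_{W_i})$ has the same eigenvalue-$1$ kernel as $\wedge^2$ of a unipotent block of size $m_i$, which decomposes into $\lfloor m_i/2\rfloor$ blocks and so contributes $\lfloor m_i/2\rfloor$.

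The crux, which I expect to be the main obstacle, is that reciprocity must control the self-dual eigenvalues $\pm1$ — these are permitted here (possibly $f_1(\pm1)=0$) rather than excluded as in Theorem \ref{gggg2}. Writing the palindromic $f_1$ as $f_1(t)=t^{g}h(t+t^{-1})$ with $\deg h=g$, the relations $s-2=(t-1)^2/t$ and $s+2=(t+1)^2/t$ for $s=t+t^{-1}$ show that a root of $h$ at $s=\pm2$ of multiplicity $\nu$ produces a root of $f_1$ at $t=\pm1$ of multiplicity $2\nu$; hence the multiplicities $m_1$ and $m_{-1}$ of $\pm1$ are even, so $\lfloor m_{\pm1}/2\rfloor=m_{\pm1}/2$. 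Summing the contributions over reciprocal pairs and over $\pm1$ and using $2\sum_{\mathrm{pairs}}m_{\mathrm{pair}}+m_1+m_{-1}=2g$ then telescopes to exactly $g$, which kills the floor corrections precisely because of the even-multiplicity fact. Therefore $\dim_\Q(\wedge^2 V)^\tau=g$, and the $g$ independent central elements $C_1,\dots,C_g$ span it, completing the proof.
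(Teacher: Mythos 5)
Your frame---Lemma \ref{lem24} plus the observation that the center of the class-two group $F/F_3\otimes\Q$ is exactly $F_2/F_3\otimes\Q\cong\wedge^2V$ with $\tau$ acting as $\wedge^2(\tau|_V)$, and $\tau|_V$ the companion matrix of $f_1$---is exactly the paper's frame (Lemma \ref{lem24}, Proposition \ref{gggg}, Proposition \ref{gggg3}). But the first assertion of the theorem, $\tau(C_\ell)=C_\ell$, you never actually prove: you assert that \eqref{ooo} together with the first-row data ``is equivalent to'' the fixed-point equation $\wedge^2\tau(C_\ell)=C_\ell$ and defer the check as bookkeeping. This equivalence is not formal. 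Writing out the fixed-point equation against the companion matrix gives, for $i>1$, relations of the shape $d_{i,j}=d_{i-1,j-1}-a_{j-1}\,d^{(\ell)}_{i-1,2g}+a_{i-1}\,d^{(\ell)}_{j-1,2g}$, whose correction terms involve the \emph{last-column} entries $d^{(\ell)}_{\bullet,2g}$, whereas the recursion \eqref{ooo} has Kronecker deltas in their place (and with coefficient $a_{j-2}$, not $a_{i-1}$, on the third term); the row-one equations likewise force the first row of any fixed vector to reproduce its last column. So everything hinges on the identity $d^{(\ell)}_{i,2g}=d^{(\ell)}_{1,i}$, i.e.\ that the last column of the $d^{(\ell)}$-array recovers the prescribed delta pattern of the first row---and this is precisely where the palindromic relation $a_j=a_{2g-j}$ does its work. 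The paper devotes the entire first half of its proof to this claim, establishing it by a two-case telescoping computation ($i\le g$ and $i>g$) before concluding $d''_{i,j}=d^{(\ell)}_{i,j}$ by induction. Your proposal neither states nor proves this lemma, so the centrality half of the theorem is genuinely missing, not merely compressed.

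The second half of your argument is correct and, on one point, more careful than the paper itself. The paper settles the dimension by citing ``the proof of Theorem \ref{gggg2}'', whose hypothesis $\Delta_\tau(\pm1)\neq0$ is exactly what Theorem \ref{gggg4} drops (``possibly $f_1(\pm1)=0$''). Your treatment of the self-dual eigenvalues---evenness of the multiplicity of $t=\pm1$ for a monic palindromic $f_1$ of even degree via $f_1(t)=t^{g}h(t+t^{-1})$, the count of $\lfloor m/2\rfloor$ eigenvalue-one blocks in $\wedge^2$ of a single Jordan block $J_{\pm1}(m)$, and the telescoping $\sum_{\mathrm{pairs}}m+m_1/2+m_{-1}/2=g$---is a genuine repair of that citation, and together with the Clebsch--Gordan count $\min(m_i,m_{i'})$ for reciprocal pairs (the paper's \eqref{pp9}) it correctly yields the upper bound $g$. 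Your independence argument via the $e_{1,\ell+1}$ coefficients matches Corollary \ref{gggg45533}. So: keep the dimension count, but supply the last-column identity and the coefficient-by-coefficient induction; as it stands, ``equivalent by inspection'' would not survive being written out.
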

The proof will appear in \S \ref{s23}. 

\begin{exa}
If $g=1$, then $ C_1 := ( e_{12},0)$.

Next, if $g=2$, the central elements $C_1$ and $C_2$ are given by
\[ C_1 = (e_{12} +e_{14}+(1-a_2)e_{23}+ e_{34},0), \ \ \ \ \ \ \ C_2 = ( e_{13} +a_1e_{23}+ e_{24},0). \]
If $g=3$, the central elements $C_1,C_2$, and $C_3$ are given by
\[C_1=( e_{12}+e_{16} +(1-a_2)e_{23} -a_3 e_{24}-a_4 e_{25} +(1-a_2) e_{34}-a_3 e_{35}+(1-a_4)e_{45}+e_{56},0) ,\]
\[C_2=( e_{13}+ e_{15}+a_1 e_{23} +e_{24} +a_1 e_{25} +e_{26}+(a_1 -a_3) e_{34}+e_{35}+a_1 e_{45}+e_{46},0), \]
\[C_3= (e_{14}+ a_1 e_{24} +e_{25} +a_2 e_{34} +a_1 e_{35}+ e_{36},0).\]
\end{exa}

Theorem \ref{gggg4} also integrally holds in some cases. Precisely, we have
\begin{cor}\label{gggg45533}
Regarding $F/F_2$ as a $\Z[t^{\pm 1 } ]$-module, we assume that $F/F_2 $ is isomorphic to $ \Z[t^{\pm 1}] / (1 +a_1 t + \cdots +a_{2g-1}t^{2g-1}+t^{2g} )$ with $a_{2g-j}=a_j$, as in the reciprocity in Theorem \ref{gggg4}. Then, the center of $F/F_3 \rtimes \Z$ is spanned by the central elements $C_1,\dots, C_{ g}$ in Theorem \ref{gggg4}.
\end{cor}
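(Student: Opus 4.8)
The plan is to reduce the whole statement to an explicit lattice computation inside $F_2/F_3$. First I would observe that, under the stated hypothesis $F/F_2\cong \Z[t^{\pm1}]/(1+a_1t+\cdots+t^{2g})$, all coefficients $a_j$ lie in $\Z$, so the recursion \eqref{ooo} manufactures integers $d_{i,j}^{(\ell)}\in\Z$; consequently each $C_\ell=(\sum_{i<j}d_{i,j}^{(\ell)}e_{i,j},0)$ is a genuine element of the \emph{integral} group $F/F_3\rtimes\Z$, not merely of its rationalization. Tensoring with $\Q$, the module becomes $\Q[t]/f_1(t)$ with the required reciprocity, so the hypotheses of Theorem \ref{gggg4} are met; I may therefore take for granted that $\tau(C_\ell)=C_\ell$ and that $C_1,\dots,C_g$ form a $\Q$-basis of the center of $(F/F_3\otimes\Q)\rtimes\Z$.

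Next I would pin down the center itself. A direct check from \eqref{hhh} shows that $(a,\alpha)$ is central in $F/F_3$ if and only if $a\sqcap b=b\sqcap a$ for all $b$, which forces $a=0$ once $m\geq2$; hence $Z(F/F_3)=F_2/F_3$. Feeding this into Lemma \ref{lem24}, and noting that $(\alpha,0)$ commutes with $(e,1)$ exactly when $\tau(\alpha)=\alpha$, the center of $F/F_3\rtimes\Z$ is the integral fixed lattice $L:=\ker(\tau-\id\colon F_2/F_3\to F_2/F_3)$, while the center of the rationalization is $L\otimes\Q$. Theorem \ref{gggg4} gives $\dim_\Q(L\otimes\Q)=g$ with basis $C_1,\dots,C_g$; since $L$ sits inside the free abelian group $F_2/F_3$, it is itself free of rank $g$, and $\langle C_1,\dots,C_g\rangle\subseteq L$ is a sublattice of finite index. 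The whole content of the corollary is thus that this index equals $1$.

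The hard part is precisely this upgrade from a $\Q$-basis to a $\Z$-basis, and here I would use the \emph{explicit} first-row values, not just their existence. The base case of \eqref{ooo} reads $d_{1,j}^{(\ell)}=\delta_{j-1,\ell}$ for $j\leq g$ and $d_{1,j}^{(\ell)}=\delta_{2g-j+1,\ell}$ for $j>g$. Reading these off along the $g$ coordinates $e_{1,2},e_{1,3},\dots,e_{1,g+1}$ — using the first case at $j=\ell+1$ when $\ell<g$, and the second case at $j=g+1$ when $\ell=g$ — shows that the $g\times g$ coefficient block $[\,d_{1,\ell+1}^{(\ell')}\,]_{\ell,\ell'}$ is exactly the identity matrix. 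Therefore, given any $\alpha\in L$ and writing $\alpha=\sum_\ell c_\ell C_\ell$ with $c_\ell\in\Q$ (possible by the previous paragraph), the $e_{1,\ell+1}$-coordinate of $\alpha$ equals precisely $c_\ell$; as $\alpha$ is integral, every $c_\ell\in\Z$. Hence $L=\langle C_1,\dots,C_g\rangle$, which is the claim.

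To summarize the architecture: the rational statement (Theorem \ref{gggg4}) together with Lemma \ref{lem24} localizes the center to a rank-$g$ fixed lattice and supplies a $\Q$-basis, while the one genuinely integral input — the identity block hidden among the top-row coefficients $d_{1,j}^{(\ell)}$ — forces that basis to be unimodular. I expect the only points needing care to be the bookkeeping of the two ranges $j\leq g$ and $j>g$ when isolating the identity block, and confirming at the outset that $a_j\in\Z$ so that the $C_\ell$ are integral in the first place.
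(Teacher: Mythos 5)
Your proof is correct and follows essentially the same route as the paper's: integrality and $\tau$-invariance of the $C_\ell$ via Theorem \ref{gggg4}, identification of the center with the fixed lattice $\ker(\tau-\id)\subset F_2/F_3$ of rank $g$ (via Lemma \ref{lem24} and Corollary \ref{gggg23}), and the Kronecker-delta structure of the first-row coefficients $d_{1,j}^{(\ell)}$. If anything, your write-up is more careful at the final step: the paper passes from ``$C_1,\dots,C_g$ are linearly independent and the rank is $g$'' directly to spanning, which over $\Z$ only yields a finite-index sublattice, whereas your identity-block observation --- that the $e_{1,\ell+1}$-coordinate of $\alpha=\sum_{\ell'}c_{\ell'}C_{\ell'}$ equals $c_\ell$ exactly, hence is an integer whenever $\alpha$ is integral --- supplies the unimodularity (index $1$) that the paper's one-line conclusion implicitly needs.
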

\begin{proof}
We can verify that $C_\ell$ lies in $F_2/F_3$ by definition, and satisfies $ \tau (C_\ell) = C_{\ell}$ by Theorem \ref{gggg4}. Hence, $C_{\ell}$ lies in the center of $F/F_3 \rtimes \Z$. By Corollary \ref{gggg23}, the rank of the center is $g$.
Since the $e_{1,j}$-th component of $C_{\ell}$ is $d_{1,j}=\delta_{j-1,\ell}$ for $j \geq g$, the elements $ C_1, \dots, C_g$ are linearly independent. Thus, the center is spanned by $C_\ell$'s.
\end{proof}

\section{Knot invariants of quadratic forms}
\label{dsa}
As an application of \S \ref{S54}, we will introduce a knot invariant of a quadratic form (The proof of every proposition in this section will be shown in \S \ref{k2k2}). Let $K \subset S^3$ be a tame knot. We suppose basic knowledge about knot theory as seen in \cite{CF,CL,Hil}. We denote the fundamental group $\pi_1(S^3 \setminus K) $ by $ \pi_K$ hereafter. Furthermore, we fix a meridian $ \mathfrak{m} \in \pi_K$.

Here, let us briefly review the (rational) Alexander modules and polynomials of knots. Since $H_1( S^3 \setminus K) \cong \Z $, we have the universal abelian covering $\widetilde{E}_K \ra S^3 \setminus K $. This fundamental group $ \pi_1( \widetilde{E}_K)$ is the commutator subgroup $[\pi_K ,\pi_K]$, which is acted on by the covering transformation $t$. The rational homology $H_1( [\pi_K ,\pi_K ] ;\Q ) $ as a $\Q[t^{\pm 1}]$-module is called {\it the (rational) Alexander module} of $K$. We then have $\Q[t^{\pm 1}]$-module isomorphisms,
\begin{equation}\label{pp87}H_1( \widetilde{E}_K ;\Q ) \cong H_1( [\pi_K ,\pi_K ] ;\Q ) \cong \Q[t ]/{f_1}(t) \oplus \cdots \oplus \Q[t ]/{f_n}(t) , \end{equation}
for some non-zero polynomials ${f_1}(t), \dots, {f_n}(t) $ such that $f_{i+1}(t)$ is divisible by $ f_i(t)$. It is well known as the reciprocity (see, e.g., \cite[Section IX]{CF}) that 
$$f_i(t^{-1})= t^{-{\rm deg} f_i } f_i (t) \ \ \ \mathrm{and} \ \ \ \ f_i(\pm 1) \neq 0. $$
The product ${f_1}(t) \cdots {f_n}(t) $ is equal to the usual Alexander polynomial $\Delta_K $ of the knot $K$.

Next, we will consider the set \eqref{bbb5} below. Let $F$ be the free group of rank $ \mathrm{deg}(f_n(t))$, which is even by reciprocity. Then, we get uniquely the isomorphism $t : F/F_2 \otimes \Q \ra F/F_2 \otimes \Q$ which admits the $\Q[t^{\pm 1}]$-isomorphism $F/F_2 \otimes \Q \cong\Q[t ]/{f_n}(t) . $ In addition, since $ \Z=\langle \mathfrak{m}\rangle $ has a conjugacy action on $\pi_K$, let us consider the following set consisting of group homomorphisms:
\begin{equation}\label{bbb5} \mathcal{H}_K := \{ \ \textrm{homomorphism } f : \pi_1 (S^3 \setminus K) \ra (F/F_2 \otimes \Q) \rtimes \Z \mathrm{ \ s.t. \ } f( \mathfrak{m})=(0,1) \ \}.
\end{equation}
This set is a $\Q[t^{\pm 1}]$-module and is identified with the set of $\Z$-equivariant homomorphisms from $[\pi_K, \pi_K] $ to $F/F_2 \otimes \Q $. Hence, by \eqref{pp87}, the $\Q[t^{\pm 1}]$-module $ \mathcal{H}_K $ is isomorphic to the $\Q[t^{\pm 1}]$-module $H_1( [\pi_K ,\pi_K ] ;\Q ) $; in particular, the isomorphism class of $\mathcal{H}_K $ is independent of the choice of $\mathfrak{m}.$

Now we are in a position to state Proposition \ref{lem6}. Let $F/F_k \otimes \Q$ be the rationalization of the nilpotent group $F/F_k$, i.e., a Malcev completion of $F/F_k$, and suppose an automorphism $\tau : F/F_k \otimes \Q \ra F/F_k \otimes \Q$, as a lift of $t$. Then, we have the semi-direct product $ (F/F_k \otimes \Q) \rtimes \Z $. We will discuss lifts of $ \mathcal{H}_K$ to $ (F/F_k \otimes \Q) \rtimes \Z $:

\begin{prop}\label{lem6}
Let $k \in \N $. Every homomorphism $ f : \pi_1(S^3 \setminus K) \ra (F/F_2 \otimes \Q) \rtimes \Z$ in $\mathcal{H}_K$ uniquely admits a lift $ \widetilde{f} : \pi_1(S^3 \setminus K) \ra (F/F_k \otimes \Q) \rtimes \Z $ of $f$ such that $ \widetilde{f} ( \mathfrak{m})=(0,1)$.
\end{prop}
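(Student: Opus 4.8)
The plan is to construct the lift $\widetilde{f}$ by induction on $k$, using the central extension structure of the lower central series of $F$. The base case $k=2$ is exactly the given $f \in \mathcal{H}_K$, so the content is the inductive step: assuming we have a lift $\widetilde{f}_{k-1} : \pi_K \to (F/F_{k-1} \otimes \Q) \rtimes \Z$ with $\widetilde{f}_{k-1}(\mathfrak{m}) = (0,1)$, I would produce a unique lift to $(F/F_k \otimes \Q) \rtimes \Z$. The key structural fact I would exploit is that the projection $F/F_k \to F/F_{k-1}$ is a central extension with kernel $F_{k-1}/F_k \otimes \Q$, which is a finite-dimensional $\Q$-vector space carrying a $\Q[t^{\pm 1}]$-module structure via $\tau$. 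Thus lifting $\widetilde{f}_{k-1}$ across this central extension is an obstruction-theoretic problem governed by the group cohomology of $\pi_K$ with coefficients in the module $V := F_{k-1}/F_k \otimes \Q$.

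First I would set up the obstruction: the existence of a homomorphism lifting $\widetilde{f}_{k-1}$ is controlled by a class in $H^2(\pi_K; V)$, where $V$ carries the $\pi_K$-action pulled back through the abelianization $\pi_K \to \Z = \langle \mathfrak{m} \rangle$ and the $t$-action on $V$. Because $S^3 \setminus K$ is a homotopy-equivalent to a $2$-complex (indeed an aspherical space with $\pi_K$ of cohomological dimension $2$), and more to the point because the meridian $\mathfrak{m}$ generates $H_1$, I would analyze $H^2(\pi_K; V)$ via the half-lives-half-dies phenomenon and the reciprocity conditions $f_i(\pm 1) \neq 0$ from \eqref{pp87}. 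The reciprocity hypothesis guarantees that $t - 1$ acts invertibly on the relevant Alexander-type modules, which is precisely what forces the obstruction group to vanish after rationalization; this is the same mechanism that makes $\mathcal{H}_K$ identify with $\Z$-equivariant maps out of $[\pi_K,\pi_K]$.

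For uniqueness, I would observe that two lifts differ by a map $\pi_K \to V$ that is trivial on $\mathfrak{m}$ (since both send $\mathfrak{m}$ to $(0,1)$ and the lift of $\mathfrak{m}$ is pinned down), hence by a crossed homomorphism, i.e., a class in $H^1(\pi_K; V)$ vanishing on $\mathfrak{m}$. Again invoking that $t-1$ acts invertibly on $V$ (by reciprocity, since $V$ is built from tensor powers of $F/F_2 \otimes \Q$ whose eigenvalues $\alpha_i$ multiply to units but, crucially, are never $1$ under the $f_i(1)\neq 0$ condition), I would show this $H^1$ is forced to be zero once the meridian constraint is imposed, giving uniqueness of $\widetilde{f}$. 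The normalization $\widetilde{f}(\mathfrak{m}) = (0,1)$ is preserved at each stage because $(0,1)$ in $(F/F_k \otimes \Q)\rtimes \Z$ canonically lifts $(0,1)$ in $(F/F_{k-1}\otimes \Q)\rtimes \Z$.

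The main obstacle I anticipate is verifying the vanishing of the degree-two obstruction class rigorously: one must check not merely that the abstract cohomology group is small, but that the specific cocycle arising from the failure of $\widetilde{f}_{k-1}$ to lift as a homomorphism is a coboundary. The cleanest route is probably to bypass cohomology and instead use the $\Z$-equivariant description: identify $\widetilde{f}$ with a $t$-equivariant homomorphism $[\pi_K,\pi_K] \to F/F_k \otimes \Q$ extending the given one, and lift it directly using that $F/F_k \otimes \Q$ is a Malcev completion, so that the universal property of the rational nilpotent completion together with the invertibility of $t-1$ yields existence and uniqueness simultaneously. I expect reconciling the two group actions — the $\tau$-action on $F/F_k \otimes \Q$ and the conjugation action of $\mathfrak{m}$ on $\pi_K$ — to be the delicate bookkeeping that must be handled carefully.
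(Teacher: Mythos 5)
Your overall skeleton---induction on $k$ through the central extensions $F_{k-1}/F_k \otimes \Q \ra F/F_k \otimes \Q \ra F/F_{k-1} \otimes \Q$, with obstruction/cocycle analysis for existence and uniqueness---is in the same spirit as the paper's proof, but the mechanism you invoke to make everything vanish is false, and it fails precisely in the case this paper is about. You claim that reciprocity ($f_i(\pm 1)\neq 0$) makes $t-1$ act invertibly on the relevant coefficient modules, adding parenthetically that the eigenvalues ``are never $1$.'' That is true for $F/F_2 \otimes \Q$ itself but false for the kernel $V = F_{k-1}/F_k \otimes \Q$ of the extension: already for $k=3$ the eigenvalues of $\tau$ on $F_2/F_3 \otimes \Q$ are the products $\alpha_i \alpha_j$ of roots, and reciprocity \emph{guarantees} pairs with $\alpha_i\alpha_j = 1$; the kernel of $\tau - \id$ on $F_2/F_3 \otimes \Q$ is $g$-dimensional, spanned by the central elements $C_1,\dots,C_g$ of Theorem \ref{gggg4}---the very objects the paper is built around. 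So $t-1$ is not invertible on $V$, and both your $H^2$-vanishing (existence) and your $H^1$-vanishing (uniqueness) arguments collapse as stated. Moreover, computing honestly via the Wang sequence for the infinite cyclic cover, $H^2(\pi_K;V) \cong \mathrm{Coker}\bigl(t-1 \ \mathrm{on} \ \Hom_{\Q}(H_1(\widetilde{E}_K;\Q),V)\bigr)$, which is nonzero whenever the Alexander module and $V$ share an eigenvalue---something reciprocity alone does not preclude---so one cannot get away with showing the abstract obstruction group is small; this is exactly the obstacle you flag yourself but do not resolve.

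The missing idea, which is what the paper's Lemma \ref{lem9} supplies, is to restrict to the commutator subgroup $[\pi_K,\pi_K] = \pi_1(\widetilde{E}_K)$, where the coefficient action becomes trivial, and to feed in two topological vanishings: $H_2^{\rm gr}([\pi_K,\pi_K];\Z) \cong H_2(\widetilde{E}_K;\Z) = 0$ (second homology of the infinite cyclic cover of a knot complement vanishes) and $H_3(\pi_K;\Z) \cong H_3(S^3\setminus K;\Z) = 0$. The first makes the pullback of the central extension along the restricted map split, so the $\Z$-equivariant homomorphism $[\pi_K,\pi_K] \ra F/F_{k-1}\otimes\Q$ lifts as a homomorphism; the second, via the Lyndon--Hochschild--Serre sequence for $[\pi_K,\pi_K] \ra \pi_K \ra \Z$, lets one choose the splitting $\Z$-equivariantly; the lift of $f$ on all of $\pi_K$ is then obtained by decreeing $\widetilde{f}(\mathfrak{m}) = (0,1)$, and uniqueness is pinned down through this normalization together with the correspondence between lifts and equivariant splittings. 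Your final paragraph does gesture at the right reduction (identifying lifts with $t$-equivariant homomorphisms out of $[\pi_K,\pi_K]$), but even there you attribute the vanishing to ``invertibility of $t-1$'' rather than to $H_2(\widetilde{E}_K;\Z) = 0$; without that homological input nothing in your proposal shows the specific obstruction cocycle dies, so the gap is genuine in both the existence and the uniqueness steps.
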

Next, we will focus on the case $k=3$ and discuss the preferred longitude evaluated by such a lift:

\begin{prop}\label{lem7}
Let $\mathfrak{l} \in \pi_K $ be the preferred longitude.

(I) Given a lift $\widetilde{f} : \pi_1 (S^3 \setminus K )\ra (F/F_3 \otimes \Q) \rtimes \Z $, the target $\widetilde{f} (\mathfrak{l}) $ is contained in the center of $ ( F/F_3 \otimes \Q) \rtimes \Z $.

(II) This $\widetilde{f} (\mathfrak{l} ) $ does not depend on the choice of the lift $ \tau : F/F_3 \otimes \Q \ra F/F_3 \otimes \Q $ of $t$.
\end{prop}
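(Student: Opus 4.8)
The plan is to treat the two statements (I) and (II) separately, since each isolates a distinct feature of the longitude $\mathfrak{l}$. For part (I), the key algebraic fact I would exploit is that the preferred longitude commutes with the meridian $\mathfrak{m}$ in $\pi_K$: indeed, $\mathfrak{m}$ and $\mathfrak{l}$ generate the peripheral $\Z^2$ subgroup coming from the boundary torus of a tubular neighborhood of $K$. Applying the homomorphism $\widetilde{f}$, we learn that $\widetilde{f}(\mathfrak{l})$ commutes with $\widetilde{f}(\mathfrak{m})=(0,1)$ in $(F/F_3\otimes\Q)\rtimes\Z$, so $\widetilde{f}(\mathfrak{l})$ lies in the centralizer $Z((0,1))$ in the notation of Lemma \ref{lem24}. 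Next I would observe that $\mathfrak{l}$ lies in the commutator subgroup $[\pi_K,\pi_K]$ (the preferred longitude is null-homologous in the knot complement), so under the identification of $\mathcal{H}_K$ with $\Z$-equivariant maps out of $[\pi_K,\pi_K]$, the image $\widetilde{f}(\mathfrak{l})$ has trivial $\Z$-component, i.e. it lies in $(F/F_3\otimes\Q)\times\{0\}$. Combining these two facts with Lemma \ref{lem24} — whose conclusion is that the center of $G\rtimes\Z$ equals $Z((e,1))\cap(Z(G)\times\{0\})$ — I would still need to verify that $\widetilde{f}(\mathfrak{l})$ lands in the center $Z(G)$ of $G=F/F_3\otimes\Q$ itself, not merely in $G\times\{0\}$.

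To close that last gap I would use the two-step nilpotency of $F/F_3$ together with the structure of $\mathfrak{l}$. Because $F/F_3$ is $2$-step nilpotent, its center $Z(F/F_3\otimes\Q)$ contains the entire commutator subgroup $F_2/F_3\otimes\Q$. Since $\mathfrak{l}\in[\pi_K,\pi_K]$ and the $k=2$ reduction of $\widetilde{f}(\mathfrak{l})$ measures the abelianized image of $\mathfrak{l}$, I would argue that the abelianized (i.e. $F/F_2\otimes\Q$) component of $\widetilde{f}(\mathfrak{l})$ vanishes: the longitude is trivial in the Alexander module's relevant degree-one part because $f$ was already fixed to send $\mathfrak{l}$ into the commutator direction. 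This forces $\widetilde{f}(\mathfrak{l})\in(F_2/F_3\otimes\Q)\times\{0\}\subset Z(F/F_3\otimes\Q)\times\{0\}$, and then Lemma \ref{lem24} immediately yields that $\widetilde{f}(\mathfrak{l})$ is central in $(F/F_3\otimes\Q)\rtimes\Z$.

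For part (II), the independence of the lift $\tau$, I would compare two lifts $\tau,\tau'$ of the same automorphism $t$ of $F/F_2\otimes\Q$. Any two such lifts differ by an automorphism of $F/F_3\otimes\Q$ that is the identity on $F/F_2\otimes\Q$, hence is of the form $\mathrm{id}+\delta$ where $\delta$ maps into the central subgroup $F_2/F_3\otimes\Q$. Proposition \ref{lem6} guarantees that the lift $\widetilde{f}$ is unique once $\tau$ is fixed, so what I must show is that the resulting values $\widetilde{f}(\mathfrak{l})$ agree. The comparison reduces to tracking how $\widetilde{f}(\mathfrak{l})$ changes under the correction $\delta$, and since $\mathfrak{l}$ is a product of commutators whose $\tau$-dependence only enters through the abelianized action (which $\tau$ and $\tau'$ share by hypothesis), the central correction terms cancel. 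The main obstacle I anticipate is precisely this part (II) bookkeeping: one has to expand $\widetilde{f}(\mathfrak{l})$ using the group law \eqref{hhh} and verify that every term involving the difference $\tau-\tau'$ contributes only to the $F_2/F_3$-component in a way that is forced to vanish by the fact that $\mathfrak{l}$ is null-homologous and that $\widetilde{f}(\mathfrak{m})=(0,1)$ is pinned down independently of the lift. Establishing this cancellation cleanly — rather than through an opaque computation — is where the real work lies.
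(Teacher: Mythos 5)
Your outline of part (I) follows the paper's route — longitude commutes with the meridian, hence $\widetilde{f}(\mathfrak{l})\in Z((0,1))$; longitude lies in $[\pi_K,\pi_K]$, hence trivial $\Z$-component; $F_2/F_3\otimes\Q$ is central by two-step nilpotency; conclude via Lemma \ref{lem24} — but it breaks exactly at the step you flagged as "the gap," and your attempt to close it is circular. Knowing $\mathfrak{l}\in[\pi_K,\pi_K]$ kills only the $\Z$-component. To kill the $F/F_2\otimes\Q$-component you need the class of $\mathfrak{l}$ to vanish in $H_1([\pi_K,\pi_K];\Q)\cong H_1(\widetilde{E}_K;\Q)$ (the abelianization of the commutator subgroup, a strictly stronger statement than membership in it), since the composite $[\pi_K,\pi_K]\to F/F_3\otimes\Q\to F/F_2\otimes\Q$ factors through that homology. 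Your stated reason — that "$f$ was already fixed to send $\mathfrak{l}$ into the commutator direction" — is not a hypothesis anywhere: the only normalization is $f(\mathfrak{m})=(0,1)$. The paper supplies the missing topological input: the preferred longitude bounds a lifted Seifert surface in the infinite cyclic cover $\widetilde{E}_K$, so $[\mathfrak{l}]=0$ in $H_1(\widetilde{E}_K;\Z)$. (An algebraic alternative was also available to you: commutation with $(0,1)$ forces $\tau$-invariance of $\widetilde{f}(\mathfrak{l})$, so its degree-one component $a\in\Q[t]/f_n(t)$ satisfies $ta=a$, and $f_n(1)\neq 0$ by reciprocity gives $a=0$; but you invoked neither argument.)

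For part (II) you present a plan rather than a proof — you say yourself that the cancellation "is where the real work lies" — and the reduction you propose does not go through as stated. Two lifts $\tau,\tau'$ do differ by a central twist $\mathrm{id}+\delta$ with $\delta\colon F/F_2\otimes\Q\to F_2/F_3\otimes\Q$ linear, but $\widetilde{f}$ and $\widetilde{f}'$ take values in semidirect products carrying \emph{different} $\Z$-actions, and $(\mathrm{id}+\delta)\times\mathrm{id}_{\Z}$ intertwines them only when $\tau'=(\mathrm{id}+\delta)\circ\tau\circ(\mathrm{id}+\delta)^{-1}$, which generically fails; so $\widetilde{f}'$ is not $(\mathrm{id}+\delta)\circ\widetilde{f}$ and there is no single correction term to track. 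The paper's actual device sidesteps this: it compares the two lifts value-by-value on the Wirtinger generators, setting $g(\gamma_k)=\Upsilon(\widetilde{f}(\gamma_k),\widetilde{f}'(\gamma_k))$ (in effect the central difference $b_k-b_k'$), checks via the crossing relation \eqref{pp0} that $g$ is a homomorphism into $(F_2/F_3\otimes\Q)\rtimes\Z$ — the quadratic terms cancel in the difference — observes that any such homomorphism factors through the metabelian quotient $H_1(\widetilde{E}_K;\Q)\rtimes\Z$, and then concludes $g(\mathfrak{l})=0$ from the same null-homology of $\mathfrak{l}$ in $\widetilde{E}_K$ used in (I), whence $\widetilde{f}(\mathfrak{l})=\widetilde{f}'(\mathfrak{l})$. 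That difference-homomorphism construction is the concrete idea missing from your bookkeeping sketch, and without it (or the correct null-homology input, which your proposal again cites only vaguely as "null-homologous") part (II) remains unproved.
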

To summarize, we can introduce a new knot invariant:
\begin{defn}\label{bbds3}
We define a knot invariant, $ \mathcal{Q}_K$, to be the map from $ \mathcal{H}_K \cong H_1(\widetilde{E}_K ;\Q)$ to the center $Z (F/F_3 \otimes \Q \rtimes \Z) $ which sends $f$ to $\widetilde{f} (\mathfrak{l} ) $.
\end{defn}
If $n=1$ in \eqref{pp87}, the computation of this invariant is not so hard; in fact, since we later describe concretely the group structure of $(F/F_3 \otimes \Q)\rtimes \Z$ (see Propositions \ref{gggg3}), it is not so hard to find $\widetilde{f} $ and $ \widetilde{f} (\mathfrak{l} ) $ by using a presentation of $\mathfrak{l} $; see Examples \ref{ex5} and \ref{ex6}.

Furthermore, we later show that this invariant is of a quadratic form and isometric:
\begin{prop}\label{lem8}
This map, $ \mathcal{Q}_K $, is a quadratic form on $\mathcal{H}_K $. Moreover, $\mathcal{Q}_K $ is isometric in the sense of $\mathcal{Q}_K (\tau_*(f))= \mathcal{Q}_K (f)$ for any $f \in \mathcal{H}_K $.
\end{prop}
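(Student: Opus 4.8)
The plan is to read off from the group law \eqref{hhh} how the lift $\widetilde{f}$ depends on $f$, and then to exploit the uniqueness in Proposition \ref{lem6} to control that dependence. Write each element of $(F/F_3 \otimes \Q) \rtimes \Z$ as a triple $((a,\alpha),n)$ with $a \in F/F_2 \otimes \Q$, $\alpha \in F_2/F_3 \otimes \Q$, $n \in \Z$, and let $d \colon \pi_K \ra \Z$ be the abelianization. For $f \in \mathcal{H}_K$ I record the lift as $\widetilde{f}(x) = ((\phi_f(x), \psi_f(x)), d(x))$, where the linear part $\phi_f$ is precisely the crossed homomorphism determined by $f$ (normalized by $\phi_f(\mathfrak{m})=0$), while $\psi_f$ is the new quadratic datum. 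Comparing both sides of $\widetilde{f}(xy)=\widetilde{f}(x)\widetilde{f}(y)$ via \eqref{hhh} yields the twisted-cocycle identities
\[ \phi_f(xy) = \phi_f(x) + \tau^{d(x)}\phi_f(y), \qquad \psi_f(xy) = \psi_f(x) + \tau^{d(x)}\psi_f(y) + \phi_f(x) \sqcap \tau^{d(x)}\phi_f(y). \]
Since $\mathfrak{l} \in [\pi_K,\pi_K]$ has $d(\mathfrak{l})=0$, and since Proposition \ref{lem7} together with the explicit description of the center in Theorem \ref{gggg4} places $\widetilde{f}(\mathfrak{l})$ in $F_2/F_3 \otimes \Q$, we get $\phi_f(\mathfrak{l})=0$ and hence $\mathcal{Q}_K(f) = \psi_f(\mathfrak{l})$.

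For homogeneity, scaling $f$ by $\lambda \in \Q$ scales its linear part to $\lambda\phi_f$. I would check that the pair $(\lambda\phi_f, \lambda^2\psi_f)$ again satisfies the two identities above and the meridian normalization: bilinearity of $\sqcap$ rewrites $\lambda^2(\phi_f(x)\sqcap\tau^{d(x)}\phi_f(y))$ as $(\lambda\phi_f(x))\sqcap\tau^{d(x)}(\lambda\phi_f(y))$, so $(\lambda\phi_f,\lambda^2\psi_f)$ is a lift of $\lambda f$ fixing $\mathfrak{m}$. Uniqueness (Proposition \ref{lem6}) then gives $\psi_{\lambda f} = \lambda^2\psi_f$, whence $\mathcal{Q}_K(\lambda f) = \lambda^2 \mathcal{Q}_K(f)$.

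For the polarization, set $\chi := \psi_{f+g} - \psi_f - \psi_g$. Subtracting the $\psi$-identities for $f$, $g$, and $f+g$ and using bilinearity of $\sqcap$ shows that $\chi$ solves the inhomogeneous twisted cocycle equation whose only nonhomogeneous term is the symmetric cross term $\phi_f(x)\sqcap\tau^{d(x)}\phi_g(y) + \phi_g(x)\sqcap\tau^{d(x)}\phi_f(y)$, subject to $\chi(\mathfrak{m})=0$. The heart of the argument is the observation that such an equation has a unique normalized solution and that this solution depends $\Q$-linearly on its inhomogeneous term, which is the same existence/uniqueness mechanism underlying Proposition \ref{lem6}. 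Granting this, $\chi$ is bilinear in $(f,g)$ because its inhomogeneous term is, and therefore $B(f,g) := \mathcal{Q}_K(f+g) - \mathcal{Q}_K(f) - \mathcal{Q}_K(g) = \chi(\mathfrak{l})$ is a bilinear form. Together with homogeneity this shows $\mathcal{Q}_K$ is a quadratic form.

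Finally, for the isometry I would identify the $t$-action concretely: since $f(\mathfrak{m})=(0,1)$, the $\Q[t^{\pm1}]$-action sends $f$ to $\tau_*(f) = \mathrm{Ad}_{(0,1)}\circ f$, conjugation by $(0,1)$ in the target. Conjugating the lift $\widetilde{f}$ by $(0,1)$ produces a homomorphism into $(F/F_3\otimes\Q)\rtimes\Z$ that still sends $\mathfrak{m}$ to $(0,1)$ and lifts $\tau_*(f)$, so by uniqueness $\widetilde{\tau_*(f)} = \mathrm{Ad}_{(0,1)}\circ\widetilde{f}$. Evaluating at the longitude and using that $\widetilde{f}(\mathfrak{l})$ is central by Proposition \ref{lem7}(I), conjugation by $(0,1)$ acts trivially, giving $\mathcal{Q}_K(\tau_*(f)) = (0,1)\,\widetilde{f}(\mathfrak{l})\,(0,1)^{-1} = \widetilde{f}(\mathfrak{l}) = \mathcal{Q}_K(f)$. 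The main obstacle is the uniqueness-and-linearity step for $\chi$ in the polarization; everything else reduces to the group law \eqref{hhh} and the centrality already established.
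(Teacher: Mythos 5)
Your isometry argument is essentially the paper's (the paper writes it as $\mathcal{Q}_K(\tau_*f)=\widetilde{f}(\mathfrak{m}^{-1}\mathfrak{l}\mathfrak{m})=\widetilde{f}(\mathfrak{l})$; your conjugation-in-the-target version plus uniqueness of the normalized lift is the same computation), and your overall strategy for quadraticity --- cocycle bookkeeping, scaling, and polarization, all controlled by the uniqueness in Proposition \ref{lem6} --- is a legitimate, more abstract alternative to the paper's route, which instead fixes a Wirtinger presentation and solves the linear system \eqref{pp0} for the arc labels $b_k$, whose inhomogeneous terms are quadratic in the $a_k$'s. However, your key identity for $\psi_f$ is wrong as written, and this is a genuine gap rather than a notational slip. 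In the semidirect product, the second component of $\widetilde{f}(x)\widetilde{f}(y)$ is not $\psi_f(x)+\tau^{d(x)}\psi_f(y)+\phi_f(x)\sqcap\tau^{d(x)}\phi_f(y)$, because by Proposition \ref{gggg} the lift $\tau$ does \emph{not} act diagonally on $F/F_3\otimes\Q$: one has $\tau(b,\beta)=(tb,\lambda(b)+\eta(\beta))$, where $\eta$ is linear but $\lambda$ is a quadratic function constrained by \eqref{pp4} to satisfy $\lambda(x+y)-\lambda(x)-\lambda(y)=tx\sqcap ty-\eta(x\sqcap y)$. The correct identity therefore carries an extra summand $\lambda_{d(x)}(\phi_f(y))$, quadratic in $\phi_f(y)$; this term is visibly nonzero in the paper's own computations (the $-b_1b_4$-type terms in Example \ref{gggg3rei}, and the term $(x-x^2-y+2xy+y^2)/2$ in the trefoil computation of Example \ref{exa8}). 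Consequently, your verification that $(c\phi_f,c^2\psi_f)$ ``satisfies the two identities'' checks the wrong equations: with the missing term restored, the check fails whenever $\lambda$ has a nonzero linear part, and in the proof of Proposition \ref{gggg} the linear part $Ax$ of $\lambda(x)=x^TBx+Ax$ is an arbitrary choice --- the paper's trefoil lift has $A\neq 0$. So homogeneity $\psi_{cf}=c^2\psi_f$ is simply false for a general lift $\tau$, even though the final value $\widetilde{f}(\mathfrak{l})$ happens to be homogeneous.

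The repair stays inside your framework but must be made explicit: first restore the $\lambda_{d(x)}(\phi_f(y))$ term; then fix a lift $\tau$ whose $\lambda$ is homogeneous quadratic, which is possible over $\Q$ by taking $\lambda(x)=\mathcal{B}(x,x)/2$ for the symmetric bilinear $\mathcal{B}(x,y)=tx\sqcap ty-\eta(x\sqcap y)$ from the proof of Proposition \ref{gggg}; and invoke Proposition \ref{lem7}(II) to see that $\mathcal{Q}_K$ is insensitive to this choice. With homogeneous $\lambda$ the scaling check goes through, and in your polarization step the inhomogeneous term acquires the additional summand $\lambda_{d(x)}(\phi_f(y)+\phi_g(y))-\lambda_{d(x)}(\phi_f(y))-\lambda_{d(x)}(\phi_g(y))$, which is again bilinear in $(f,g)$, so the argument survives intact. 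Your flagged ``uniqueness-and-linearity'' step for $\chi$ is sound granting Proposition \ref{lem6} as stated: two normalized solutions differ by a crossed homomorphism $\pi_K\ra F_2/F_3\otimes\Q$ vanishing at $\mathfrak{m}$, i.e., by the discrepancy of two normalized lifts of one and the same homomorphism, which the uniqueness in Proposition \ref{lem6} excludes; additivity of the solution in the inhomogeneous term then gives bilinearity of $\chi$. Once patched, your proof is correct and in fact yields a slightly sharper conclusion than the paper's (honest homogeneity $\mathcal{Q}_K(cf)=c^2\mathcal{Q}_K(f)$ together with a bilinear polarization, where the paper only exhibits $\widetilde{f}(\mathfrak{l})$ as a quadratic function of the arc labels), at the price of the care about the choice of $\tau$ that the paper's Wirtinger computation absorbs into the functions $\lambda$ and $\eta$ appearing in \eqref{pp0}.
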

Such a quadratic form is completely classified in \cite{Mil6}. Thus, after the computation of $\mathcal{Q}_K $, we can obtain quantitative information from the invariant $\mathcal{Q}_K $.

\subsection{Some examples of the invariant $\mathcal{Q}_K $ }
\label{jiji}
We will explain a procedure of computing $\mathcal{Q}_K $. In many cases, given a diagram of a knot $K$ with the Alexander polynomial $\Delta_K$, it is not hard to compute $\mathcal{Q}_K $, as shown in what follows.
\begin{exa}\label{exa8}
Let us compute the invariant $\mathcal{Q}_K $ of the trefoil knot. Notice that the Wirtinger presentation gives a presentation of the fundamental group:
$$ \pi_K \cong \langle\ a,b ,c\ | \ c = b^{-1}ab, \ a = c^{-1}bc, \ b = a^{-1}ca \ \rangle \cong \langle\ a,b \ | \ aba=bab \ \rangle. $$
As is well-known, the Alexander polynomial is $\Delta_K= t^2 -t +1$ and $H_1(\widetilde{E}_K;\Z) \cong \Z[t] / ( t^2 -t +1)$. Choose an automorphism $\tau: F/F_3 \otimes \Q \ra F/F_3\otimes \Q $ such that $F/F_2 \otimes \Q \cong \Q[t]/(t^2-t+1)$.

Then, for $x,y,x',y'\in \Z$, the correspondence,
$$ a \longmapsto (0 ,1) , \ \ \ b \longmapsto (x +t y,1) , \ \ \ c \longmapsto (x' + ty',1) , $$
gives rise to a homomorphism $f: \pi_K \ra F/F_2\rtimes \Z$, if and only if $x' + y' t= x+y -t x \in F/F_2 .$ Moreover, if so, we can verify that the correspondence,
$$ a \longmapsto ( 0,0 ,1) \in F_2/F_3 \times F/F_2\times \Z , \ \ \ b \longmapsto (\beta, x +t y,1) \in F_2/F_3 \times F/F_2\times \Z, $$
yields a homomorphism $f: \pi_K \ra F/F_3\rtimes \Z$ if and only if $\beta = ( x -x^2 -y +2x y +y^2 )/2 + \alpha. $ Here, $\alpha$ is arbitrary. Then, the Wirtinger presentation gives the preferred longitude $\mathfrak{l}$ as $ a^{-1}bc^{-1}a b^{-1 }c$. Thus, it is easy to compute $\widetilde{f}(\mathfrak{l}) $ as
$$ \widetilde{f}(\mathfrak{l})=\widetilde{f}(a^{-1}bc^{-1}a b^{-1 }c)= ( x ^2 - xy +y ^2 , 0,0) \in F_2/F_3 \times F/F_2\times \Z . $$
In summary, the map $ \mathcal{Q}_K : \mathcal{H}_K \ra Z (F/F_3 \otimes \Q \rtimes \Z)\cong \Q $ is equal to the map $(x +y t ) \mapsto x^2 -xy + y^2$.
\end{exa}
\begin{exa}\label{ex5}
In a similar way, we can compute $\mathcal{Q_K}$ when $\mathcal{H}_K$ is isomorphic to $\Q[t]/(1+at+t^2)$ for some $a \in \Q$. For example, when $K$ is one of the knots $4_1$, $5_2$ or $6_1$, the resulting computations are described as:
\[ \mathcal{Q}_{4_1} : \Q[t]/(t^2-3t+1) \lra \Q ; \ \ \ (x +t y)\longmapsto x^2 -3xy + y^2. \]
\[ \mathcal{Q}_{5_2} : \Q[t]/(2t^2-3t+2) \lra \Q ; \ \ \ (x +t y)\longmapsto 2x^2 -3xy + 2y^2. \]
\[ \mathcal{Q}_{6_1} : \Q[t]/(2 t^2-5t+2) \lra \Q ; \ \ \ (x +t y)\longmapsto 2x^2 -5xy + 2y^2. \]
\end{exa}
\begin{exa}\label{ex6}
However, the computation becomes little complicated even if $\mathrm{ deg}\Delta _K(t) = 4$, in which case we should use a computer program. Here, notice from Corollary \ref{gggg23} that the center of $(F/F_3 \otimes \Q) \rtimes \Z$ is isomorphic to $\Q^2$. As examples of $\mathrm{ deg}\Delta _K(t)= 4$, if $K$ is one of $5_1$, $ 6_2$ or $6_3$, we give the resulting computations of $ \mathcal{Q}_K$ as follows.
\[ \mathcal{Q}_{5_1} : \Q[t]/(t^4-t^3 +t^2-t+1) \lra \Q^2 ;\]
\[ (x +t y +t^2 z + t^3 w)\longmapsto (x^2 +xy+y^2 +wz+yz+z^2+ w^2 , -wx +wy +xy +wz+xz+yz). \]
\[ \mathcal{Q}_{6_2} : \Q[t]/(t^4-3t^3 +3t^2-3t+1) \lra \Q^2 ; \]
\small
\[ (x +t y +t^2 z + t^3 w)\longmapsto (w^2 +2wx+ x^2 +xy+y^2 +wz+yz+z^2, -2w^2 +wx-2x^2 +3wy -xy -2y^2 -wz+3xz -yz - 2 z^2).\]
\large
\[ \mathcal{Q}_{6_3} : \Q[t]/(t^4-3t^3 +5t^2-3t+1) \lra \Q^2 ; \]
\[ (x +t y +t^2 z + t^3 w)\longmapsto (-w^2+6wx-x^2+2wy-xy-y^2-wz+2xz-yz-z^2 ,\]
\[ \ \ \ \ \ \ \ \ \ \ \ \ \ \ \ \ \ \ \ \ \ \ \ \ \ \ \ \ \ \ \ \ \ \ \ \ 2w^2-9wx+2x^2-wy+3xy+2y^2+3wz-xz+3yz+2z^2). \]
\large
\end{exa}
In our experience, we conjecture a condition of non-degeneracy:
\begin{conj}
Let $C_1 ,\dots, C_g$ be the central elements in Theorem \ref{gggg4}. Then, for any $i \leq g$, the composite map $ \mathcal{H}_K \stackrel{\mathcal{Q}_{K} }{\lra} Z (F/F_3 \otimes \Q \rtimes \Z) \stackrel{\rm proj}{\lra}\Q \langle C_i \rangle $ would be non-degenerate. That is, every (real) eigenvalue of $ \mathcal{Q}_{K}$ is not zero.
\end{conj}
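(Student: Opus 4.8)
The plan is to translate the non-degeneracy of $\mathcal{Q}_K^{(i)} := \mathrm{proj}_{\Q\langle C_i\rangle}\circ\,\mathcal{Q}_K$ into the invertibility of a symmetric matrix and then control that matrix through the reciprocity of the Alexander polynomial. Since $\mathcal{H}_K$ and $\Q\langle C_i\rangle$ are finite-dimensional over $\Q$, the form $\mathcal{Q}_K^{(i)}$ is non-degenerate if and only if the symmetric matrix $G^{(i)}$ representing it is invertible; over $\R$ this is precisely the assertion that $0$ is not an eigenvalue of $G^{(i)}$, so the real-eigenvalue formulation of the conjecture is equivalent to $\det G^{(i)}\neq 0$. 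First I would make $G^{(i)}$ explicit: using the group law of $(F/F_3\otimes\Q)\rtimes\Z$ (Proposition \ref{gggg3}) and the longitude computation of Example \ref{exa8}, I would write $\mathcal{Q}_K(f)$ for $f$ corresponding to $v=\sum_s x_s t^s\in\mathcal{H}_K$ as a quadratic polynomial in the $x_s$ whose $F_2/F_3$-coefficients are the $d_{i,j}^{(\ell)}$ of \eqref{ooo}, and then read off $G^{(i)}$ by projecting onto $C_i$. The degree-$2$ case is the template: for $\Delta_K=at^2+bt+a$ one finds $G=\bigl(\begin{smallmatrix} 2a & b \\ b & 2a\end{smallmatrix}\bigr)$, whence $\det G = 4a^2-b^2 = f(1)f(-1)\neq 0$, nonvanishing precisely by the reciprocity $f(\pm1)\neq 0$.

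The structural key is that each $\mathcal{Q}_K^{(i)}$ is isometric. By Proposition \ref{lem8} together with $\tau(C_i)=C_i$ (Theorem \ref{gggg4}), the bilinear form $B^{(i)}$ that $\mathcal{Q}_K^{(i)}$ polarizes satisfies $B^{(i)}(tv,tw)=B^{(i)}(v,w)$, equivalently $B^{(i)}(tv,w)=B^{(i)}(v,t^{-1}w)$; thus $t$ acts as an isometry and $B^{(i)}$ is a symmetric form compatible with the involution $t\mapsto t^{-1}$. On a single cyclic factor $\Q[t^{\pm1}]/(f_j)$, which is self-conjugate under this involution by reciprocity, such invariant forms are classified by the theory of isometric structures: $B^{(i)}$ is a trace pairing twisted by a multiplier $\lambda_i^{(j)}\in\Q[t^{\pm1}]/(f_j)$, and it is non-degenerate on that factor if and only if $\lambda_i^{(j)}$ is a unit, i.e. $\gcd(\lambda_i^{(j)},f_j)=1$. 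I would therefore compute $\lambda_i^{(j)}$ from the explicit $G^{(i)}$ and try to show it is coprime to $f_j$, with the degree-$2$ identity $\det G = f(1)f(-1)$ suggesting that $\det G^{(i)}$ will factor through values of $f_j$ at the roots relevant to $C_i$. A complementary route is to compare $B^{(i)}$ with the Blanchfield pairing (Proposition \ref{lem0008}), which is classically non-degenerate; if $B^{(i)}$ can be exhibited as the piece of the Blanchfield pairing selected by the direction $C_i$, its non-degeneracy should transfer, at least on each primary component.

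The main obstacle is the general, non-cyclic case ($n>1$ in \eqref{pp87}): there $\mathcal{H}_K$ is not a cyclic $\Q[t^{\pm1}]$-module, and $G^{(i)}$ acquires off-diagonal blocks coupling distinct cyclic summands, so diagonal non-degeneracy does not by itself force $\det G^{(i)}\neq 0$. I expect the resolution to rest on an orthogonality lemma intrinsic to isometric forms: generalized eigenspaces for roots $\alpha,\alpha'$ are $B^{(i)}$-orthogonal unless $\alpha\alpha'=1$, so summands supported at unpaired roots must pair to zero, block-diagonalizing $G^{(i)}$ along the primary decomposition \eqref{a5} and reducing the problem to each reciprocal pair $\{\alpha,\alpha^{-1}\}$ (with $\alpha\neq\pm1$ guaranteed by $f_j(\pm1)\neq0$). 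On such a pair, non-degeneracy becomes the statement that the $\mathrm{eigsp}(\alpha)$--$\mathrm{eigsp}(\alpha^{-1})$ pairing is perfect, i.e. the multiplier is a unit. Establishing this perfectness when the partition data $n_u^{(i)}$ of \eqref{a5} exceed $1$ — the coupling that survives inside a single primary block with higher multiplicity, beyond the $\ell_i=1$ hypothesis under which Theorem \ref{gggg4} is proved — is the step I anticipate to be genuinely hard, and is plausibly the reason the statement is posed as a conjecture rather than proved.
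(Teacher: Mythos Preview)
The statement you are attempting to prove is labelled a \emph{conjecture} in the paper; there is no proof to compare against, and the paper offers only the computational evidence of Examples \ref{exa8}--\ref{ex6}. So the relevant question is whether your outline constitutes a proof. It does not, and the gap is not only the one you flag at the end.

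Your degree-$2$ ``template'' already fails as written. You assert that for $\Delta_K = at^2+bt+a$ the Gram matrix is $G=\bigl(\begin{smallmatrix}2a & b\\ b & 2a\end{smallmatrix}\bigr)$, i.e.\ that $\mathcal{Q}_K$ is determined by the Alexander polynomial. Proposition \ref{lem0008} in the paper is precisely a counterexample: the Pretzel knots $P(3,-3,3)$ and $P(3,-3,9)$ both have $\Delta_K = 2t^2-5t+2$ (indeed they are $S$-equivalent), yet $\mathcal{Q}_{P(3,-3,3)} = 12x^2+30xy+12y^2$ and $\mathcal{Q}_{P(3,-3,9)} = 6x^2+15xy+6y^2$, neither of which matches your formula $2x^2-5xy+2y^2$. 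The input to $\mathcal{Q}_K$ is the longitude $\mathfrak{l}$ as a specific word in $\pi_K$, not merely the module $\mathcal{H}_K$; the coefficients $d_{i,j}^{(\ell)}$ of \eqref{ooo} describe the \emph{target} basis $C_1,\dots,C_g$, not the source form. Hence your identity $\det G = f(1)f(-1)$ is not available in general, and the subsequent plan to ``compute $\lambda_i^{(j)}$ and show it is coprime to $f_j$'' cannot proceed from Alexander data alone.

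For the same reason, the proposed transfer from the Blanchfield pairing is not available in the direction you need. Proposition \ref{lem0008} shows that $\mathcal{Q}_K$ is \emph{not} a function of the Blanchfield pairing, so non-degeneracy of the latter cannot be pushed forward to $\mathcal{Q}_K^{(i)}$ without an additional link that the paper does not supply. What remains of your outline --- the isometry $B^{(i)}(tv,tw)=B^{(i)}(v,w)$ and the resulting orthogonality of eigenspaces unless $\alpha\alpha'=1$ --- is correct and useful for block-diagonalising $G^{(i)}$, but it only reduces the question; it does not decide whether the diagonal blocks are non-singular, which is exactly the knot-dependent information you have lost. In short, the conjecture remains open, and your proposal does not close it even in the cyclic degree-$2$ case.
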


For comparison, let us examine the Blanchfield pairing \cite{Bla}, which is a hermitian bilinear form on $H_1( \widetilde{E}_K ;\Z)$. This pairing has been studied in a number of ways, and it has had some topological applications; see \cite{Hil} and references therein. If two knots have the same Blanchfield pairing up to equivalence, they are called $S$-{\it equivalent}. We will show the difference between the pairing and our quadratic form $\mathcal{Q}_K$. 
\begin{prop}\label{lem0008}
There are two $S$-equivalent knots such that the $\mathcal{Q}_K$ are different.
\end{prop}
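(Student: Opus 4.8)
The plan is to exhibit an explicit pair of knots that are $S$-equivalent (i.e., share the same Blanchfield pairing up to equivalence) but whose invariants $\mathcal{Q}_K$ are distinguishable. The strategy exploits a fundamental asymmetry: the Blanchfield pairing is a \emph{hermitian bilinear form}, whereas $\mathcal{Q}_K$ is a \emph{quadratic form} over $\Q$, and quadratic forms carry finer arithmetic data (signature, discriminant, Hasse--Minkowski invariants) than the bilinear datum recorded by $S$-equivalence. The simplest case to work with is $n=1$ in \eqref{pp87} with $\deg \Delta_K = 2$, where by Corollary \ref{gggg23} the center is isomorphic to $\Q$ and, as Example \ref{exa8} and Example \ref{ex5} show, $\mathcal{Q}_K$ is computed by a completely explicit binary quadratic form $x^2 + a\,xy + y^2$ (up to the normalization of the reciprocal Alexander polynomial $t^2 + at + 1$).

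First I would recall that $S$-equivalence is governed by the Seifert form up to congruence and stabilization, and that two knots with \emph{congruent} Seifert matrices (hence the same Blanchfield pairing) can nonetheless have Alexander polynomials that differ by a unit normalization or by the same polynomial presented with different module structure. The cleanest route is to select two knots whose Alexander modules $H_1(\widetilde{E}_K;\Q)$ are abstractly isomorphic as $\Q[t^{\pm 1}]$-modules (forcing the Blanchfield pairings to coincide up to equivalence, since for these small cases the pairing is determined by the module together with the reciprocity constraint), yet for which the resulting binary forms $\mathcal{Q}_K$ land in distinct equivalence classes over $\Q$. Concretely, from Example \ref{ex5} the knot $5_2$ yields $2x^2 - 3xy + 2y^2$ and $6_1$ yields $2x^2 - 5xy + 2y^2$; the discriminants $9 - 16 = -7$ and $25 - 16 = 9$ differ, so these forms are inequivalent over $\Q$. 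I would then check whether a known pair of $S$-equivalent knots realizes such a discrepancy, or failing that, construct one by a satellite or connected-sum operation that preserves the Seifert form up to congruence while altering the quadratic refinement.

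The key steps, in order, are: (i) fix a small explicit model, $n=1$ and $\deg\Delta_K = 2$, so that $\mathcal{Q}_K$ is the binary form read off from the single reciprocal factor $f_1(t) = t^2 + a t + 1$; (ii) invoke Proposition \ref{lem8} to treat $\mathcal{Q}_K$ as a genuine quadratic form, and compute its $\Q$-equivalence invariant (discriminant, or Hasse symbol) directly from $a$; (iii) produce two knots $K$ and $K'$ that are $S$-equivalent but whose coefficients $a$, $a'$ give inequivalent forms; and (iv) verify the $S$-equivalence independently from Seifert-matrix data, so that the two invariants are genuinely compared on the same equivalence class. Step (ii) reduces to the classification in \cite{Mil6} already cited, so the arithmetic is routine once the knots are in hand.

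The hard part will be step (iii): ensuring genuine $S$-equivalence while forcing a difference in $\mathcal{Q}_K$. The delicacy is that $S$-equivalence, the Blanchfield pairing, and the $\Q[t^{\pm 1}]$-module $H_1(\widetilde{E}_K;\Q)$ are tightly coupled, so a naive attempt to vary the quadratic form while fixing the bilinear one may inadvertently change the module and hence the pairing too. I expect to resolve this by choosing knots with \emph{identical} Alexander module and Seifert form over $\Q$ but whose quadratic refinement $\beta$ (the $F_2/F_3$-component appearing in Example \ref{exa8}, governed by the half-integral correction term) differs — this is precisely the extra information $\mathcal{Q}_K$ records beyond the bilinear pairing, and it is where the meta-nilpotent (as opposed to metabelian) quotient enters essentially. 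Once such a pair is located, the remaining verifications are direct computations of the type already carried out in Examples \ref{exa8}--\ref{ex5}.
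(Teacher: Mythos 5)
Your proposal is a strategy outline rather than a proof: the proposition is a pure existence statement, and the one thing it requires --- an explicit pair of $S$-equivalent knots with inequivalent $\mathcal{Q}_K$ --- is exactly what you defer (``I would then check whether a known pair \dots or failing that, construct one''). You yourself flag step (iii) as the hard part, and it is left unresolved, so no pair is ever produced. The concrete knots you do mention cannot fill the gap: $5_2$ and $6_1$ have Alexander polynomials $2t^2-3t+2$ and $2t^2-5t+2$, hence non-isomorphic Alexander modules and in particular inequivalent Blanchfield pairings, so they are not $S$-equivalent, and comparing their discriminants says nothing about the proposition. A further caution: raw discriminant comparison of binary forms over $\Q$ is not quite the right notion of equivalence here, since the forms live on $\Q[t^{\pm 1}]$-modules and should be compared through module isometries; this matters because the actual distinguishing argument is arithmetic in the ring $\Q[t]/(f_1(t))$, not in $\Q$ alone.

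The paper's proof supplies precisely the missing content: it takes the pretzel knots $P(3,3,-3)$ and $P(9,3,-3)$, which are $S$-equivalent by \cite[Lemma 5.6]{BFKP}, and computes (with machine assistance) $\mathcal{Q}_{P(3,-3,3)}(x+ty)=12x^2+30xy+12y^2$ and $\mathcal{Q}_{P(3,-3,9)}(x+ty)=6x^2+15xy+6y^2$, both on $\Q[t]/(2t^2-5t+2)$. The first form is $2$ times the second, and since $2$ admits no square root in $\Q[t]/(2t^2-5t+2)$, no module automorphism can carry one form to the other; hence the two invariants are inequivalent. Your guiding intuition --- that $\mathcal{Q}_K$ is a quadratic refinement carrying square-class information invisible to the bilinear Blanchfield datum --- is indeed the mechanism at work here (the two forms differ by the non-square scalar $2$), but without a citation certifying an actual $S$-equivalent pair and the explicit computation of both forms, the argument never gets off the ground.
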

\begin{proof}
Consider the Pretzel knots $P(3,3,-3)$ and $P(9,3,-3)$ depicted in Figure \ref{fig.color2}. According to \cite[Lemma 5.6]{BFKP}, they are $S$-equivalent. With the help of a computer, we can compute the quadratic forms as
\[ \mathcal{Q}_{ P(3,-3,3)}; \Q[t]/(2t^2 -5t+2) \lra\Q; \ \ (x+ty)\longmapsto 12 x^2 + 30xy+12y^2 , \]
\[ \mathcal{Q}_{ P(3,-3,9)}; \Q[t]/(2t^2 -5t+2) \lra\Q; \ \ (x+ty)\longmapsto 6x^2 + 15 x y+6y^2 .\]
Therefore, $\mathcal{Q}_{ P(3,-3,3)}=2 \mathcal{Q}_{ P(3,-3,9)} $. However, 2 admits no square-root in the field $\Q[t]/(2t^2 -5t+2) $. Hence, the two forms are not equivalent.
\end{proof}

\begin{figure}[htpb]
\begin{center}
\begin{picture}(100,50)
\put(-262,13){\pc{pretzel3.3.3}{0.4530174}}
\put(-112,13){\pc{pretzel9.3.3}{0.4530174}}

\end{picture}
\end{center}
\vskip 0.37pc
\caption{The Pretzel knots $P(3,3,-3)$ and $P(9,3,-3)$.
\label{fig.color2}}
\end{figure}
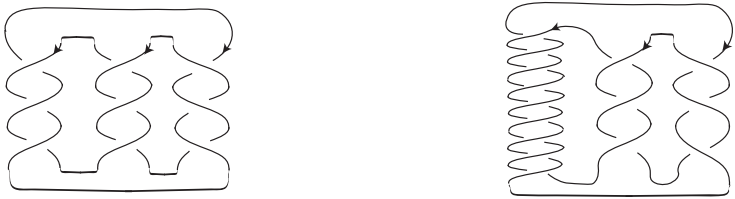

\section{Quadratic forms from mapping classes}
\label{dsa2}
The purpose of this section is to define quadratic forms, which are isometric over $\Z$, from mapping classes in $\mathcal{M}_{g,1}$. 

We begin by reviewing the Dehn-Nielsen theorem. Let $F$ be $ \pi_1(\Sigma_{g,1})$. Choose a generating set $\{ x_1, \dots, x_{2g} \}$ of $F$, where $x_i$ is represented by the curve as illustrated in Figure \ref{fig.color33}. Take
\begin{equation}\label{m334} \zeta := [x_1, x_2] [ x_3, x_4] \cdots [x_{2g-1}, x_{2g}] \in \pi_1(\Sigma_{g,1}),
\end{equation}
which is represented by the boundary curve shown in Figure \ref{fig.color33}. Since $ f \in \mathcal{M}_{g,1} $ gives an automorphism on $ \pi_1(\Sigma_{g,1} )$, we have a group homomorphism $ \mathcal{M}_{g,1} \ra \mathrm{Aut}(\pi_1(\Sigma_{g,1} ))$. The Dehn-Nielsen-Baer theorem (see, e.g., \cite[Theorem 8.8]{FM}) claims that this is injective, and the image is $ \{ \ \tau \in \mathrm{Aut}(F ) \ | \ \tau(\zeta)=\zeta \}. $

\begin{figure}[htpb]
\begin{center}
\begin{picture}(100,50)
\put(72,3){\pc{monodoromies}{0.3813}}
\put(-172,23){\pc{monodoromies2}{0.41530174}}

\put(-153,29){\Large $x_1 $}
\put(-102,32){\Large $\cdots $}
\put(-122,52){\Large $x_2 $}
\put(-92,52){\Large $\cdots $}
\put(-46,51){\Large $x_{2g}$}
\put(-82,32){\Large $x_{2g-1} $}
\put(-16,42){\Large $\zeta$}
\put(84,9){\large $\alpha_1$}
\put(106,34){\large $\alpha_2$}
\put(126,11){\large $\alpha_3$}
\put(166,21){\large $\alpha_4$}
\put(153,36){\large $\alpha_5$}
\put(153,1){\large $\alpha_6$}

\end{picture}
\end{center}
\vskip 0.37pc
\caption{ The curves $x_i$ on the surface $\Sigma_{g,1}$, and curves $\alpha_i$ on the surface $\Sigma_{2,1}$,
\label{fig.color33}}
\end{figure}
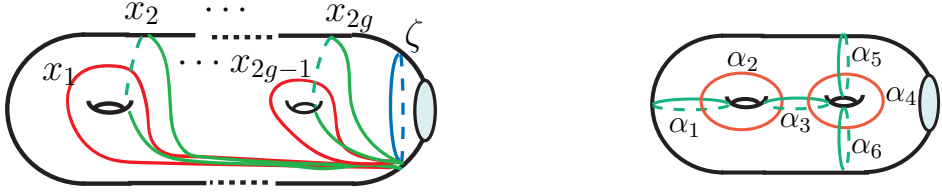

Next, we will examine the mapping-torus $T_{f}$ with respect to $f \in\mathcal{M}_{g,1} .$ By a van Kampen argument, we notice the presentation,
\begin{equation}\label{m3as2} \pi_1(T_{f}) \cong \langle x_1, \dots, x_{2g} , \gamma \ \ | \ \ [x_1, \gamma] f_*(x_1) x_1^{-1}, \dots,[x_{2g}, \gamma] f_*(x_{2g}) x_{2g}^{-1}
\ \rangle.
\end{equation}
Here, $\gamma$ represents a generator of $\pi_1(S^1). $ The boundary of $ T_{f}$ is a torus, and $\pi_1( \partial (T_{f} )) \cong \Z^2 $ is generated by $\gamma$ and $ \zeta $. Since the 3-manifold $T_f$ is Haken, its homeomorphism type is determined by the triple of $ ( \pi_1(T_{f}) ,\gamma, \zeta )$ by the Waldhausen theorem \cite{Wal}. Thus, it is sensible to consider the relation between $\pi_1(T_{f}) $ and $\zeta $ in a meta-nilpotent quotient sense as follows:

Let us establish some more terminology. Assume that $f \in \mathrm{Aut}(F)$ arises from the mapping class in $\mathcal{M}_{g,1}$ via the Dehn-Nielsen embedding $\mathcal{M}_{g,1} \hookrightarrow \mathrm{Aut}(F)$. Then, the homology $ H_1( \Sigma_{g,1};\Z) \cong F/F_2$ is made into a torsion $\Z[t^{\pm 1}]$-module. The order of $F/F_2$ is called {\it the Alexander polynomial of $f$} and will be denoted by $\Delta_f$. As is well known (see, e.g., \cite{Tur}), the leading coefficient of $ \Delta_{f}$ is $\pm 1$ and $\mathrm{deg}(\Delta_{f})=2g $.

Similar to \eqref{bbb5}, let us consider the $\Z[t^{\pm 1}]$-module defined by the formula
$$ \mathcal{H}_f := \{ \Z\textrm{-equivariant homomorphism } \phi: F \ra F/F_2 \}. $$
As will be seen later in Lemma \ref{lem9}, for any $ \phi \in \mathcal{H}_f$, we have a lift $\Phi : F \ra F/F_3$ of $\phi $ as a $\Z\textrm{-equivariant homomorphism}$. Notice $ \Phi (\zeta ) \in F_2/F_3$, since $ \zeta \in F_2 $ by \eqref{m334}. Moreover, since $f(\zeta ) =\zeta $ by the Dehn-Nielsen theorem, $ \Phi (\zeta ) $ lies in the center $Z(0,1)\cap F_2/F_3 $ (cf. Lemma \ref{lem24}). Next, let us prove the following proposition.
\begin{prop}\label{b340}
Take $f \in \mathcal{M}_{g,1}  $ as above.
The map $\mathcal{Q}_f : \mathcal{H}_f \ra Z(0,1)\cap F_2/F_3 $ which sends $\phi$ to $\Phi (\zeta ) $ is a quadratic form on $\mathcal{H}_f$, and is independent of the choice of $\Phi$. Moreover, $\mathcal{Q}_f $ is isometric, that is, for any $x \in \mathcal{H}_f $, $\mathcal{Q}_f (f_*(x))= \mathcal{Q}_f (x). $
\end{prop}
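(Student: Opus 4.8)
The plan is to reduce all three claims to a single explicit formula for $\Phi(\zeta)$ that visibly depends only on $\phi$. The starting point is the group law \eqref{hhh}: since $F_2/F_3$ is central in $F/F_3$, the commutator of two elements of $F/F_3$ depends only on their images in $F/F_2$. Concretely, writing $c(a,b):=a\sqcap b-b\sqcap a\in F_2/F_3$ for the induced (bilinear, antisymmetric) commutator pairing, a short computation from \eqref{hhh} gives $[\Phi(x),\Phi(y)]=c(\phi(x),\phi(y))$ for every homomorphic lift $\Phi$ of $\phi$. Because $\zeta=[x_1,x_2]\cdots[x_{2g-1},x_{2g}]$ and $\Phi$ is a homomorphism, this yields
\[ \mathcal{Q}_f(\phi)=\Phi(\zeta)=\sum_{i=1}^{g} c\bigl(\phi(x_{2i-1}),\phi(x_{2i})\bigr). \]
The right-hand side contains no $\Phi$, which establishes independence of the chosen lift at once; here Lemma \ref{lem9} is used only to guarantee that some lift exists, so that the left-hand side is meaningful.

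For the quadratic-form claim I would argue directly from this formula. Evaluation $\phi\mapsto\phi(x_j)$ is $\mathbb{Z}$-linear on $\mathcal{H}_f$ and $c$ is $\mathbb{Z}$-bilinear, so $\mathcal{Q}_f$ is a finite sum of products of two linear functionals. Hence $\mathcal{Q}_f(n\phi)=n^2\mathcal{Q}_f(\phi)$, and the polarization
\[ b(\phi,\psi):=\mathcal{Q}_f(\phi+\psi)-\mathcal{Q}_f(\phi)-\mathcal{Q}_f(\psi)=\sum_{i=1}^{g}\bigl(c(\phi(x_{2i-1}),\psi(x_{2i}))+c(\psi(x_{2i-1}),\phi(x_{2i}))\bigr) \]
is $\mathbb{Z}$-bilinear and symmetric. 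This is exactly the definition of a quadratic form over $\mathbb{Z}$, so this step is routine once the displayed formula is available.

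For isometry, the key input is $f(\zeta)=\zeta$ (Dehn--Nielsen) together with $\mathbb{Z}$-equivariance, which lets me exhibit a convenient lift of $f_*\phi$. Writing the module action as $(f_*\phi)(x)=\phi(f(x))=t(\phi(x))$, I claim $\Phi\circ f$ is again a $\mathbb{Z}$-equivariant lift of $f_*\phi$: it is a homomorphism, it reduces mod $F_3$ to $\phi\circ f=f_*\phi$, and the equivariance $\Phi\circ f=\tau\circ\Phi$ gives $(\Phi\circ f)\circ f=\tau\circ(\Phi\circ f)$. By the independence proved above I may compute $\mathcal{Q}_f(f_*\phi)$ with this lift:
\[ \mathcal{Q}_f(f_*\phi)=(\Phi\circ f)(\zeta)=\Phi(f(\zeta))=\Phi(\zeta)=\mathcal{Q}_f(\phi). \]

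I expect the only genuinely delicate point to be the very first one: verifying that the commutator in $F/F_3$ collapses to the pairing $c(a,b)=a\sqcap b-b\sqcap a$ on $F/F_2$, using the group law \eqref{hhh} and the centrality of $F_2/F_3$ in $F/F_3$. Once this identity is secured, both the quadratic structure and the isometry become short formal consequences. A secondary, already-noted check is that the value lands in $Z(0,1)\cap F_2/F_3$, which follows as in Lemma \ref{lem24} from $\tau(\Phi(\zeta))=\Phi(f(\zeta))=\Phi(\zeta)$.
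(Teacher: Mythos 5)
Your proof is correct and takes essentially the same route as the paper's: both rest on the observation that the commutator in $F/F_3$ collapses via \eqref{hhh} to the pairing $a \sqcap b - b \sqcap a$, yielding the lift-free formula $\Phi(\zeta)=\sum_{i=1}^{g} c\bigl(\phi(x_{2i-1}),\phi(x_{2i})\bigr)$, from which lift-independence and quadraticity follow, and isometry follows from $f(\zeta)=\zeta$ together with $\Z$-equivariance. Your write-up merely spells out the polarization step and the explicit equivariant lift $\Phi\circ f$ (and uses the correct indices $[x_{2i-1},x_{2i}]$, fixing a small index slip in the paper's proof), which the paper leaves implicit.
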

\begin{proof}
For $ (a, \alpha ), (b, \beta ) \in F/F_2 \times F_2/F_3 $ the commutator $ [(a, \alpha ), (b, \beta ) ]  \in F_2/F_3$ equals $ a \sqcap b-b \sqcap a $ by definition \eqref{hhh}. Thus, $[\Phi(x_{2i}), \Phi(x_{2i+1})]= \phi(x_{2i}) \sqcap \phi (x_{2i+1})- \phi(x_{2i+1}) \sqcap \phi(x_{2i})$ as a quadratic form. Thus, $\Phi(\zeta ) = \prod_{i=1}^g [\Phi(x_{2i}), \Phi(x_{2i+1})]$ is also a quadratic form, and does not depend on the choice of $\Phi$.

Since $\Phi$ is $\Z$-equivariant and $f(\zeta)=\zeta$, we have $ \mathcal{Q}_f(f (\phi ))= f( \Phi(\zeta ))= \Phi ( f(\zeta))=\Phi ( \zeta )= \mathcal{Q}_f (\phi ) $, which means the isometry, as desired.
\end{proof}
\begin{defn}\label{b3440}
For $f \in \mathcal{M}_{g,1}$, we denote the quadratic map $\mathcal{H}_f \ra Z(0,1)\cap F_2/F_3 $ by $\mathcal{Q}_f$.
\end{defn}
\begin{rem}\label{b140}
Let $ p : \mathcal{M}_{g,1} \ra Sp(2g;\Z) $ be the canonical surjection, and let $\mathcal{T}_{g, 1}$ be the Torelli group, that is, $\Ker(p)$. If $f$ lies in $\mathcal{T}_{g, 1}$, the space $\mathcal{H}_f$ is trivial; thus, $\mathcal{Q}_f$ is trivial. Furthermore, we should notice that, if $f, f' \in \mathcal{M}_{g,1}$ are conjugate, $ \mathcal{Q}_f$ and $\mathcal{Q}_{f'} $ are equivalent by definition. However, there are $f_1, f_2 \in \mathcal{M}_{g,1}$ such that the classes $p(f_1), p(f_2) \in Sp(2g;\Z) $ are conjugate and $\mathcal{Q}_{f_1} $ and $\mathcal{Q}_{f_2}$ are not equivalent as follows: 
\end{rem}
\begin{exa}\label{b143}
For a simple closed curve $ \alpha_i \subset \Sigma_{2,1}$ in Figure \ref{fig.color33}, let $\tau(\alpha_i)$ be the positive Dehn twist along $\alpha_i $ in $\mathcal{M}_{2,1}$. Consider the mapping classes,
$$f_1 =\tau(\alpha_2) \tau(\alpha_3) \tau(\alpha_4)^{-1} \tau(\alpha_5) ^{-1}\tau(\alpha_1), \ \ \ \ f_2 =\tau(\alpha_2) \tau(\alpha_3) \tau(\alpha_3)\tau(\alpha_3)\tau(\alpha_4)^{-1} \tau(\alpha_5) ^{-1}\tau(\alpha_1) .$$
We can easily verify that the images $ p(f_1)$ and $p(f_2) $ in the symplectic group are conjugate. Furthermore, according to the knot info \cite{CL}, the mapping-tori $ T_{f_1}$ and $T_{f_2}$ are homeomorphic to the knot complements of $ 8_{20}$ and $12n_{582}$, respectively. Therefore, we will compute $\mathcal{Q}_{f_i} $ from the knot diagrams in a similar way to Examples \ref{ex5} and \ref{ex6}.

Let us compute the $\mathcal{Q}_{f_i} $. As in \cite{CL}, $F=H_1(\Sigma_{2,1};\Z)$ is a $\Z[t^{\pm 1}]$-module of the form $ \Z[t^{\pm 1}]/( (1-t+t^2)^2) $. Then, by Corollary \ref{gggg23}, the center $ Z(F/F_3 \rtimes \Z)$ is of rank 2 and generated by $C_1,C_2$. Let $ P_i : Z(F/F_3 \rtimes \Z)\ra \langle C_i \rangle $ the projection. With the help of a computer, the composite $ (P_1 +P_2) \circ \mathcal{Q}_{f_1}$ is shown to be the map,
$$(x +t y +t^2 z +t^3 w) \longmapsto w^2-2xw +x^2-wy+xy+y^2+wz-xz+yz+z^2, $$
and $ (P_1 +P_2)\circ \mathcal{Q}_{f_2}$ is equal to $3 (P_1 +P_2) \circ \mathcal{Q}_{f_1} $. 
However, 3 has no square-root in $ \Z[t^{\pm 1}]/( (1-t+t^2)^2) $. In summary, $\mathcal{Q}_{f_1} $ and $\mathcal{Q}_{f_2}$ turn out to be not equivalent.
\end{exa}

\section{Proofs of the statements in \S \ref{S54} }
\label{s23}
Here, we give the proofs of Theorem \ref{gggg2} and Theorem \ref{gggg4}.

We will suppose an automorphism $ \tau : F/F_3 \otimes \Q \stackrel{\sim }{\lra } F/F_3 \otimes \Q $ and will concretely express the action of $ \tau. $
For any $ X= (\sum_{i=1}^{m} {b_i} e_i , \ \sum_{i < j} c_{ij } e_{i j} ) \in F/F_3 \otimes \Q $, the target $\tau (X)$ can be described as
\begin{equation}\label{gggg256} \tau ((\sum_{i=1}^{m} {b_i} e_i), \ (\sum_{i < j} c_{ij } e_{i j} ) ) = ((\sum_{i=1}^{m} b_i' e_i), \ (\sum_{i< j} c_{ij }' e_{i j} ) ) \in F/ F_2 \otimes \Q \times F_2/F_3 \otimes \Q. \end{equation}
for some rational numbers $ b_i' $ and $ c_{ij }' $.
\begin{prop}\label{gggg}
Using the notation $b_i' $ and $ c_{ij }' $ in \eqref{gggg256}, let us regard $b_i' $ and $ c_{ij }' $ as functions with respect to ${b_i} , c_{ij } .$

Then, the $ b_i' $ is a linear function of the $b_i$'s, and the $ c_{ij }$'s are sums of a linear function of the $c_i $'s and a quadratic function of the $ b_i $'s.

Moreover, the restriction on $F_2/F_3 \otimes \Q$ is determined by the formula $\tau (0,x \sqcap y -y\sqcap x) = (0, tx \sqcap ty - ty \sqcap tx)$ for any $x, y \in F/F_2$.
\end{prop}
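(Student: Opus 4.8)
The plan is to exploit that $\tau$ is a \emph{group} automorphism of $F/F_3 \otimes \Q$ which preserves the lower central filtration, and to analyze its effect through the product decomposition $(a,\alpha) = (a,0)\cdot(0,\alpha)$. First I would record three structural facts about the group law \eqref{hhh}: the subgroup $\{0\} \times F_2/F_3 \otimes \Q$ is exactly the center; the inverse is $(a,\alpha)^{-1} = (-a,\ a\sqcap a - \alpha)$; and, as already noted in the proof of Proposition \ref{b340}, one has $[(a,\alpha),(b,\beta)] = (0,\ a \sqcap b - b \sqcap a)$. Since $\tau$ sends $F_2$ into $F_2$, it preserves the center and induces a $\Q$-linear map $\sigma$ on $F_2/F_3 \otimes \Q$, while on $F/F_2 \otimes \Q$ it induces the linear map $t$. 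Writing $\tau(a,0) = (ta,\ \psi(a))$, this already yields $\tau(a,\alpha) = (ta,\psi(a))\cdot(0,\sigma(\alpha)) = (ta,\ \psi(a) + \sigma(\alpha))$; hence $b_i'$ depends only on, and linearly on, the $b_j$'s (the first claim), and the contribution of the $c_{ij}$'s to $c_{ij}'$ is exactly the linear map $\sigma$.

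For the final assertion about the restriction to $F_2/F_3 \otimes \Q$, I would argue that $\sigma$ is pinned down by commutators. Since $\tau$ is a homomorphism, $\tau([(x,0),(y,0)]) = [\tau(x,0),\tau(y,0)]$, and by the commutator formula above the right-hand side equals $(0,\ tx \sqcap ty - ty \sqcap tx)$, because commutators are insensitive to the central coordinates $\psi(x),\psi(y)$. The left-hand side is $\tau(0,\ x \sqcap y - y \sqcap x) = (0,\ \sigma(x \sqcap y - y \sqcap x))$. This gives precisely the displayed formula, and since the elements $x \sqcap y - y \sqcap x$ (the images of the commutators $[x,y]$) span $F_2/F_3 \otimes \Q$, the formula determines $\sigma$ and hence the entire restriction.

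It then remains to show $\psi$ is quadratic in the $b_i$'s, which I expect to be the main point. Applying $\tau$ to the identity $(a,0)\cdot(b,0) = (a+b,\ a \sqcap b)$ and comparing central coordinates gives the cocycle relation $\psi(a+b) - \psi(a) - \psi(b) = ta \sqcap tb - \sigma(a\sqcap b) =: B(a,b)$, where $B$ is bilinear. Iterating this over a basis yields $\psi(\sum_i b_i e_i) = \sum_i \psi(b_i e_i) + \sum_{i<j} b_i b_j\, B(e_i,e_j)$, so it suffices to control $\psi$ along each line. Computing powers in the group gives $(a,0)^n = (na,\ \tbinom{n}{2}\, a \sqcap a)$, and applying $\tau$ and comparing central coordinates gives the scaling law $\psi(\lambda a) = \lambda \psi(a) + \tbinom{\lambda}{2} B(a,a)$ for $\lambda \in \Q$; specializing to $\psi(b_i e_i)$ makes each such term a polynomial of degree at most $2$ in $b_i$. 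Substituting back expresses every $c_{ij}'$ as the sum of the linear form $\sigma$ in the $c_{ij}$'s and a polynomial of degree at most $2$ in the $b_i$'s. The only delicate step is the passage from the additive cocycle identity to genuine polynomiality: the non-symmetry of $\sqcap$ makes $B$ non-symmetric, so I would keep the ordered sum $\sum_{i<j}$ throughout and justify the rational scaling law from the $\Q$-divisibility of $F/F_3 \otimes \Q$ rather than only for integer $n$.
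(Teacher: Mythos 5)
Your proposal is correct and is essentially the paper's own argument: your splitting $\tau(a,\alpha)=(ta,\ \psi(a)+\sigma(\alpha))$ is the paper's decomposition $\kappa_x(\alpha)=\lambda(x)+\eta(\alpha)$, your commutator computation reproduces \eqref{pp2} (the paper's product $\tau\bigl((x,0)(y,0)(-x,x\sqcap x)(-y,y\sqcap y)\bigr)$ is exactly the commutator $[(x,0),(y,0)]$), and your cocycle relation is the paper's \eqref{pp4}. The only differences are that you spell out the final quadraticity step (basis expansion plus the rational scaling law via divisibility), which the paper compresses into ``a common discussion'' on bilinear forms and quadratic functions, and that your caution about a non-symmetric $B$ is unnecessary, since $\psi(a+b)=\psi(b+a)$ already forces $B$ to be symmetric --- the very fact the paper records and uses.
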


Before going the proof, let us show Theorem \ref{gggg2}. For this, we briefly review the Jordan decomposition over $\C$. For $\lambda \in \C \setminus \{0\}$ and $ \ell \in \N$, let $J_{\lambda}(\ell)$ be the Jordan block of size $\ell$ and eigenvalue $\lambda $. In other words, $J_{\lambda}(\ell) $ is $ \C[t]/(t-\lambda)^{\ell} $ as a $\C[t]$-module. According to \cite[Theorem 2]{MV},
the tensor product of $ J_{\lambda}(\ell)$ and $J_{\mu}(n)$ has the following decomposition:
\begin{equation}\label{pp9} J_{\lambda}(\ell) \otimes_{\C} J_{\mu}(n) \cong \bigoplus_{w=1}^{\mathrm{min}(\ell,n)} J_{ \ell \mu}(\ell + n- 2w-1) . \end{equation}
In addition, the exterior square $ J_{\lambda}(\ell) \bigwedge_{\C} J_{\lambda}(\ell)$ is a direct sum of $ J_{\lambda^2}(\dagger )$'s.
\begin{proof}[Proof of Theorem \ref{gggg2}.]
By Lemma \ref{lem24}, the center is isomorphic to the kernel of the linear map $ \mathrm{id}_{ F_2/F_3}- \tau|_{F_2/F_3}: F_2/F_3 \ra F_2/F_3$. In particular, the center is free.

Next, we will determine the rank of the kernel, which equals the rank of the center. For this, we denote by $\tau \otimes \C $ the extension of the map $\tau|_{F_2/F_3}$ to $F_2 /F_3 \otimes_{\Z}\C $. By Jordan decomposition of $ F_2 /F_3 \otimes_{\Z}\C$,
the desired rank of the center is equal to the number (i.e., multiplicity) of the Jordan blocks of eigenvalue $1 $. As a $ \C[t^{\pm 1}]$-submodule of $F_2 /F_3 \otimes_{\Z}\C $, let  $W_{\tau}$ be the direct sum of such Jordan blocks of eigenvalue $1$.
It is enough for the proof to determine the multiplicity of the direct sum.

We will compute the multiplicity.
Consider the subspace, $V$, of $F_1/F_2\otimes_{\Z} F_1 /F_2 \otimes_{\Z} \C $ generated by $(x\otimes y -y\otimes x) \otimes 1 $, and the $\tau$-equivariant linear map $\psi : V \ra F_2/F_3 \otimes \C $ which takes $(x\otimes y -y\otimes x) \otimes 1 $ to $ x\sqcap y -y\sqcap x$. By the definition of $\sqcap $ and Proposition \ref{gggg}, considering $(x,y)=(e_i,e_j)$ with $i<j $ can easily verify that $\psi $ is an isomorphism.
In summary, $F_2 /F_3 \otimes_{\Z} \C $ is linear isomorphic to the exterior square of $ F_1 /F_2 \otimes_{\Z} \C$.
As in \eqref{a5}, recall the Jordan decomposition of $ F_1 /F_2 \otimes_{\Z} \C$ as
$$ F/F_2 \otimes \C \cong \bigoplus_{i=1}^k \Bigl( J_{ \alpha_i}( n_{1}^{(i)} ) \oplus J_{ \alpha_i}( n_{2}^{(i)} ) \oplus \cdots \oplus J_{ \alpha_i}( n_{\ell_i}^{(i)} ) \Bigr). $$
Since the square $\alpha_i^2$ is not 1 by the assumption $\Delta_{\tau} (\pm 1) \neq 0$,
the submodule $W_{\tau}$ does not contain any direct summand of the form $ J_{\lambda}(\ell) \bigwedge_{\C} J_{\lambda}(\ell)$. Hence, this $W_{\tau}$ is a direct sum of $ J_{ \alpha_i}( n_{u}^{(i)} ) \otimes J_{ \alpha_i}( n_{v}^{(j)} ) $ such that $\alpha_i \alpha_j=1$ with $i<j $.
More precisely, it follows from \eqref{pp9} that the subspace $W_{\tau}$ is isomorphic to
$$ \bigoplus_{(i,j) \in \{ (i,j ) \in \N^2 | \ \alpha_i \alpha_j =1 , \ i< j \} } \bigoplus_{u=1} ^{\ell_i } \bigoplus_{v=1} ^{\ell_j } \bigoplus_{w=1} ^{ \mathrm{min} ( n_u^{ (i)} , n_v^{ (j)}) } J_{1} ( n_u^{ (i)}+ n_v^{ (j)}- 2w-1).$$
Notice that the multiplicity of $W_{\tau}$ is equal to \eqref{a77} exactly.
Since the multiplicity is equal to the rank of the center as mentioned above, we complete the proof.
\end{proof}

\begin{proof}[Proof of Proposition \ref{gggg}]
For $x \in F/F_2$ and $ \alpha \in F_2/F_3$, we can define $\kappa_{x}(\alpha) \in F_2/F_3$ such that $ \tau (x,\alpha)= (tx, \kappa_{x}(\alpha) ) $. Since $\tau$ is a group homomorphism, we have
\begin{equation}\label{pp}(tx+ ty, tx \sqcap ty + \kappa_{x}(\alpha) + \kappa_{y}(\beta) ) = (t x+ty, \kappa_{x +y }( x \sqcap y+ \alpha + \beta)) \end{equation}
for any $x ,y\in F/F_2, \alpha,\beta \in F_2/F_3$. When $x=0, \beta =0$, we have $ \kappa_{y}(\alpha ) = \kappa_y( 0) + \kappa_0( \alpha) $. Therefore, denoting $\kappa_y( 0) $ by $ \lambda ( y )$ and $\kappa_0( \alpha ) $ by $\eta(\alpha) $, it is enough for the proof to show that $\lambda$ is a quadratic form with respect to $y$ and $ \eta$ is a linear map with respect to $\alpha $.

We will determine $\eta $ first. Applying $y=-x$ and $\beta= - x \sqcap x$ to \eqref{pp}, we have
$$ 0= -t x \sqcap tx + \lambda ( -x) +\lambda (x) + \eta ( -x \sqcap x) \in F_2/F_3. $$
Then, by computing the commutator $\tau ( (x, 0)(y,0) (-x, x \sqcap x)(-y, y \sqcap y)) $, we obtain
\begin{equation}\label{pp2} \eta (x \sqcap y - y\sqcap x) = tx \sqcap ty -ty \sqcap tx. \end{equation}
Applying $x=y=0$ to \eqref{pp}, we should notice the linearity $ \eta( \alpha + \beta )= \eta ( \alpha) + \eta (\beta ) $. Since any $\alpha $ is a linear sum of $x \sqcap y - y\sqcap x$'s, $\eta$ is determined by \eqref{pp2} as desired.

Furthermore, from the linearity of $\eta$, \eqref{pp} is reduced to
\begin{equation}\label{pp4} \lambda(x+y) = \lambda (x) + \lambda(y) + tx \sqcap ty - \eta(x \sqcap y) \ \ \ \ \ \ \mathrm{for \ any \ } x, y \in F/F_2 \otimes \Q .\end{equation}
Notice that the map $\mathcal{B}: (F/F_2 \otimes \Q)^2 \ra \Q $ which sends $ (x,y) \mapsto tx \sqcap ty - \eta(x \sqcap y) $ is a symmetric bilinear map over $\Q$.
Let $B$ be an $(m\times m)$-matrix presentation of $\mathcal{B}$. Then, as is a common discussion on equivalence of symmetric bilinear maps and quadratic forms, $\lambda (x)$ must be $ x^T B x + A x $ for some $(m\times m)$-matrix $A$.
Namely, $\lambda $ is a quadratic function as required. 
\end{proof}
\noindent
As a corollary, in the situation of Theorem \ref{gggg4}, we now explicitly describe the target $\tau (X)$:
\begin{prop}\label{gggg3}
Suppose $n=1$, i.e., $ F/F_2 \otimes \Q \cong \Q[t ]/{f_1}(t)$, and that, if we expand $f_1$ as $ \sum_{i=0}^{2g} a_it^{i}$ with $a_0=a_{2g}=1$, they satisfy $ a_j=a_{2g- j}$ for any $0< j\leq g$. Let $b_{i}, c_{ij}'$ be the functions defined in \eqref{gggg256}.

Then, there are uniquely $ q_{i,j,k} \in \Q $ with $1 \leq j,k \leq 2g$ and $2 \leq i $ such that
\[ b_{1}'= - b_{2g} \textrm{\ \ \ \ and \ \ } b_{i}' = b_{i-1}-a_{i-1}b_{2g} \]
\[ c_{1j}' = ( b_{2g}^2 -b_{2g}) a_j /2 +c_{j,2g} - b_{2g} b_j + (q_{1,j,1} b_1 + \cdots + q_{1, j,2g} b_{2g}) , \]
\begin{equation}\label{pp7} c_{ij }' = (b_{2g}^2-b_{2g}) a_{i-1}a_{j-1}/2 +c_{i-1, j-1} - c_{i-1,2g} a_{j-1} +c_{j-1,2g} a_{i-1} - a_{j-1} b_{j-1} b_{2g}+ (q_{i,j,1} b_1 + \cdots + q_{i, j,2g} b_{2g}) . \end{equation}
\end{prop}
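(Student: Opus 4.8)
The plan is to derive the formulas in \eqref{pp7} directly from the structural description of $\tau$ given in Proposition \ref{gggg}, specializing to the case $n=1$ with the reciprocal polynomial $f_1$. The key inputs are the two parts of Proposition \ref{gggg}: first, that $b_i'$ is a linear function of the $b_i$'s determined entirely by the $\Z[t^{\pm 1}]$-module action on $F/F_2 \otimes \Q$; and second, that the quadratic part $c_{ij}'$ is controlled by the single identity $\tau(0, x \sqcap y - y \sqcap x) = (0, tx \sqcap ty - ty \sqcap tx)$. So I would split the proof into the linear computation of the $b_i'$ and the computation of the $c_{ij}'$.

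First I would compute the $b_i'$. Since $F/F_2 \otimes \Q \cong \Q[t]/f_1(t)$ and $\{e_1, \dots, e_{2g}\}$ is identified with $\{1, t, \dots, t^{2g-1}\}$, multiplication by $t$ sends $e_i \mapsto e_{i+1}$ for $i < 2g$ and $e_{2g} = t^{2g-1} \mapsto t^{2g} = -\sum_{i=0}^{2g-1} a_i t^{i}$, using $a_{2g}=1$. Reading off coordinates and recalling $a_0 = 1$ gives exactly $b_1' = -b_{2g}$ and $b_i' = b_{i-1} - a_{i-1} b_{2g}$, as claimed; this part is routine and immediate from the companion-matrix form of $t$.

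The substantive step is the quadratic part. I would use the decomposition from the proof of Proposition \ref{gggg}: writing $\tau(x, \alpha) = (tx, \kappa_x(\alpha))$ with $\kappa_x(\alpha) = \lambda(x) + \eta(\alpha)$, the coefficients $c_{ij}'$ are governed by $\eta$ (the linear-in-$c$ part, determined by \eqref{pp2}) together with $\lambda$ (the quadratic-in-$b$ part, satisfying \eqref{pp4}). The linear-in-$c$ terms come from applying $\eta$ to the basis elements $e_{ij}$; using $e_{ij} = e_i \sqcap e_j$ and \eqref{pp2}, one expands $t e_i \sqcap t e_j - t e_j \sqcap t e_i$ in the $e_{\cdot,\cdot}$ basis, invoking the action $e_i \mapsto e_{i+1}$ (and the companion relation at $e_{2g}$). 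This produces the shift $c_{i-1,j-1}$ together with the correction terms $-c_{i-1,2g} a_{j-1} + c_{j-1,2g} a_{i-1}$ (and the analogous $c_{j,2g}$ term in the $i=1$ case) arising from substituting the companion relation when an index reaches $2g$. The purely quadratic terms in $b$ are then extracted from $\lambda$; the reciprocity $a_j = a_{2g-j}$ is what makes the symmetric bilinear form $\mathcal{B}$ and hence $\lambda$ computable in closed form, yielding the $(b_{2g}^2 - b_{2g}) a_{i-1} a_{j-1}/2$ and $-a_{j-1} b_{j-1} b_{2g}$ contributions. The residual terms $q_{i,j,k} b_k$ absorb the remaining bilinear ambiguity; their existence and uniqueness follow from the fact, established in Proposition \ref{gggg}, that $\lambda(x) = x^T B x + A x$ for a well-defined matrix $A$, so the linear-in-$b$ part is uniquely pinned down once $B$ is fixed.

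The main obstacle will be the bookkeeping in the quadratic terms: tracking how the companion relation at index $2g$ propagates through the bilinear expansion of $t e_i \sqcap t e_j - t e_j \sqcap t e_i$, and correctly separating the genuinely quadratic $b$-contributions (governed by $\lambda$) from the linear $c$-contributions (governed by $\eta$), while keeping the reciprocity symmetry $a_j = a_{2g-j}$ in play so that the stated coefficients come out symmetric and consistent. I expect that a careful induction, or equivalently a direct coordinate computation of $\eta(e_{ij})$ followed by the extraction of $\lambda$ from \eqref{pp4}, will suffice, with the uniqueness of the $q_{i,j,k}$ being the cleanest consequence of the structure theorem in Proposition \ref{gggg}.
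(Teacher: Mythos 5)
Your proposal is correct and follows essentially the same route as the paper's proof: both rely on the decomposition $\tau(x,\alpha)=(tx,\lambda(x)+\eta(\alpha))$ from Proposition \ref{gggg}, compute the linear-in-$c$ contribution by expanding $te_i \sqcap te_j - te_j \sqcap te_i$ via \eqref{pp2} and the companion-matrix relation at index $2g$, identify the quadratic-in-$b$ terms as an explicit solution of \eqref{pp4}, and pin down the $q_{i,j,k}$ uniquely as the residual first-degree correction. The paper merely phrases this as a verification that \eqref{pp7} satisfies \eqref{pp}, whereas you derive the formulas, but the substance is identical.
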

\begin{proof} 
Together with the above proof, it is enough to show that the correspondence \eqref{pp7} satisfies \eqref{pp}. Indeed, it is not so hard to check that $\tau (\sum_{i<j} c_{ij}e_i \sqcap e_j ) $ is equal to
$$ \sum_{i<j} c_{ij}(T e_i \sqcap Te_j - T e_j \sqcap T e_i)=
\sum_{j =2}^m c_{j, 2g}e_{1j} +\sum_{i=2}^m \sum_{j >i} (c_{i-1, j-1} - c_{i-1,2g} a_{j-1} +c_{j-1,2g} a_{i-1})e_{ij}.
$$
Therefore, we can show that the following is a solution of the equation \eqref{pp4}:
$$ \eta( \sum_{i=1}^{2g} b_{i}e_i )=\sum_{j =2}^{2g} \bigl(\frac{ (b_{2g}^2 -b_{2g}) a_j}{2} - b_{2g} b_j \bigr) e_{1j} +\sum_{i=2}^{2g} \sum_{j >i}\bigl( \frac{(b_{2g}^2-b_{2g}) a_{i-1}a_{j-1}}{2} - a_{j-1} b_{j-1} b_{2g} \bigr)e_{ij} . $$
Then, if we appropriately correct the first degree term of $\eta$, we can uniquely detect $\eta$
such that $\tau(x,\alpha)=(tx , \lambda (x)+ \eta (\alpha))$.
Hence, 
the proof is completed.
\end{proof}
We will give some examples:
\begin{exa}\label{gggg3rei}
If $g=1$, then
$$ \tau ( b_1e_1 +b_2 e_2 , c_1 e_{12}) = \bigl(-b_2 e_1 +(b_1 - b_2 a_1 )e_2, \ (a_1(b_2^2-b_2 )/2 + c_1 -b_1 b_2+q_{1,2,1} b_1 +q_{1,2,2} b_2 )e_{12} \bigr). $$
If $g=2$ and $f_1 =1+a_1t+ a_2 t^2 +a_1 t^3 +1$, the list of $c_{ij}'$ is as follows:
\normalsize
\[ \tau ( b_1 e_1 +b_2 e_2 +b_3 e_3 +b_4 e_4 , \ \ c_{12} e_{12}+ c_{13} e_{13}+ c_{14} e_{14}+c_{23} e_{23}+ c_{24} e_{24}+ c_{34} e_{34}) = \]
\[ \bigl( -b_4 e_1 + (b_1 -b_4 a_1)e_2 +( b_2 - b_4 a_2) e_3 +( b_3 - b_4 a_1) e_4 , \]
\[ (a_1 (b_4^2-b_4 )/2 +c_{14} - b_1 b_4 + b_1 q_{1,2,1} + b_2q_{1,2,2} + b_3q_{1,2,3} + b_4 q_{1,2,4 })e_{12 }\]
\[+ (a_2(b_4^2-b_4 )/2 +c_{24} - b_2 b_4 + b_1 q_{1,3,1} + b_2 q_{1,3,2} + b_3q_{1,3,3} +
b_4 q_{1,3,4} )e_{13} \]
\[+ (a_1(b_4^2-b_4 )/2 + c_{34} - b_3 b_4 + b_1 q_{1,4,1} + b_2 q_{1,4,2} + b_3q_{1,4,3} + b_4q_{1,4,4})e_{14}\]
\[ + (a_1a_2 (b_4^2-b_4 )/2 +c_{12} + c_{24} a_1 - b_2 b_4 a_2 - c_{14 }a_3 + b_1 q_{2,3,1} + b_2 q_{2,3,2} + b_3q_{2,3,3} + b_4 q_{2,3,4} )e_{23}\]
\[+(a_1^2(b_4^2-b_4 )/2 +c_{13 } + c_{34 }a_1 - b_3 b_4 a_1 - c_{14} a_1+ b_1 q_{2,4,1} + b_2 q_{2,4,2} + b_3q_{2,4,3} + b_4q_{2,4,4}) e_{24} \]
\[ +(a_1 a_2 (b_4^2-b_4 )/2+c_{23} + c_{34 }a_2 - b_3 b_4 a_2 - c_{24} a_1+ b_1 q_{3,4,1} + b_2 q_{3,4,2} + b_3q_{3,4,3} + b_4q_{3,4,4}) e_{34} \bigr)\]
\large
\end{exa}
\begin{proof}[Proof of Theorem \ref{gggg4}]
First, we claim $ d_{ i, 2g}^{(\ell)}= d_{1,i }^{(\ell)}$, that is, $\delta_{|g-i|,g-\ell} $. One observes that, if $ i \leq g$, the reciprocity $a_{2g-\ell+i}= a_{i-\ell }$ and \eqref{ooo} imply
$$d_{i, 2g}^{(\ell)}=d_{1,i }^{(\ell)}+ \sum_{j=1}^{i-1} (-a_{2g-j } \delta_{i-j,\ell }+ a_{i-j} \delta_{2g-j ,2g-\ell}) =d_{1,i }^{(\ell)}-a_{2g-\ell+i}+ a_{i-\ell }
= d_{1,i }. $$
On the other hand, if $ i>g $, we compute $d_{ i, 2g}^{(\ell)}- d_{1,i }^{(\ell)}$ as
\small
\[ \sum_{j=1}^{i-g} (a_{i-j} \delta_{2g-j,2g-\ell}-a_{2g-j} \delta_{i-j, 2g-\ell} )
+ \sum_{j=i-g+1}^{g} (a_{i-j} \delta_{2g-j,2g-\ell} -a_{2g-j} \delta_{i-j,\ell} )
+ \sum_{j=g+1}^{i-1} (a_{i-j} \delta_{2g-j,\ell} -a_{2g-j} \delta_{i-j,\ell} )
\]
\large
\[= \sum_{j=1}^{g} a_{i-j} \delta_{2g-j,2g-\ell} - \sum_{j= 1}^{i-g} a_{2g-j} \delta_{i-j, 2g-\ell}
- \sum_{j=i-g+1}^{i-1} a_{2g-j} \delta_{i-j,\ell}
+ \sum_{j=g+1}^{i-1} a_{i-j} \delta_{2g-j,\ell}
\]
\[= a_{i-\ell} - 0- a_{2g-\ell+i} + 0 =0,\]
which proves the claim. Next, let $d_{i,j}''$ be the $e_{i,j}$-th component of $ \tau(C_\ell)$. Then, Proposition \ref{gggg3} immediately leads to
\[ d_{1,j }''= d_{j, 2g, }^{(\ell)}, \mathrm{ \ \ \ and \ \ \ }d_{i,j}''= d_{i-1,j-1 }^{(\ell)}-a_{j-1}d_{i-1, 2g}^{(\ell)} + a_{i-1} d_{j-1, 2g}^{(\ell)} \ \ \ \mathrm{if} \ \ i >1. \]
An inductive discussion easily leads to $d_{i,j}''=d_{i,j}^{(\ell)}$ by the definition of $c_{i,j}$ and the above claim. Hence, we have $\tau(C_\ell)=C_{\ell}$, as desired. Since $C_\ell \in F_2/F_3$, $C_k$ lies in the center of $F/F_3 \rtimes \Z $.

Finally, we will show the latter part.
By Lemma \ref{lem24}, the center is isomorphic to the kernel of the linear map $\tau|_{F_2/F_3 \otimes \Q}: F_2/F_3 \otimes \Q \ra F_2/F_3 \otimes \Q$. In particular, the center is a $\Q$-vector space. By the proof of Theorem \ref{gggg2}, the dimension is $g$.
Since $C_1, \dots, C_g$ are linearly independent as in the proof of Corollary \ref{gggg45533}, the vector space is spanned by them, as desired.
\end{proof}

\section{Proofs of Propositions in \S \ref{dsa}}
\label{k2k2}
To begin with, we need the following lemma in order to prove Proposition \ref{lem6}:
\begin{lem}\label{lem9}
Let $G$ and $K$ be groups acted on by $\Z$. Suppose that the group homology $H_2^{\rm gr}(G;\Z) $ and $H_3^{\rm gr}(G \rtimes \Z ;\Z) $ are zero. Let $p: \overline{K} \ra K$ be a  central extension of $ K$ whose kernel $ \Ker (p)$ is isomorphic to $ H_2(K;\Q)$. 

Then, any $\Z$-equivariant homomorphism $f: G \ra K $ admits a $\Z$-equivariant homomorphism $\tilde{f} : G \ra \overline{K}$ as a lift of $f$.
\end{lem}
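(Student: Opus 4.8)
The plan is to produce the lift $\tilde{f}$ by an obstruction-theoretic argument, working with the pullback central extension along $f$. First I would form the pullback extension
\begin{equation}\label{pull}
1 \lra \Ker(p) \lra \overline{G} \lra G \lra 1,
\end{equation}
where $\overline{G} := G \times_K \overline{K}$. Because $p$ is a central extension with kernel $\Ker(p) \cong H_2(K;\Q)$, the extension \eqref{pull} is again central with the same kernel, and it is classified by an element of $H^2_{\rm gr}(G; \Ker(p))$ obtained by pulling back the class of $p$ under $f^* : H^2_{\rm gr}(K;\Ker(p)) \ra H^2_{\rm gr}(G;\Ker(p))$. A $\Z$-equivariant homomorphism $\tilde{f}: G \ra \overline{K}$ lifting $f$ is exactly the same data as a $\Z$-equivariant splitting of \eqref{pull}, so the whole problem reduces to showing that \eqref{pull} splits $\Z$-equivariantly.

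The key point is to exploit the hypothesis $H_2^{\rm gr}(G;\Z)=0$ to kill the obstruction to splitting as plain groups, and then $H_3^{\rm gr}(G \rtimes \Z;\Z)=0$ to upgrade the splitting to a $\Z$-equivariant one. First I would argue that \eqref{pull} splits as an ordinary (non-equivariant) extension: since $\Ker(p)\cong H_2(K;\Q)$ is a $\Q$-vector space, the classifying class lives in $H^2_{\rm gr}(G; \Ker(p))$, and by the universal coefficient theorem together with $H_2^{\rm gr}(G;\Z)=0$ this group vanishes (the $\Hom$ term involves $H_1$ which contributes nothing to a central $\Q$-module obstruction in the relevant degree, and the $\Ext$ term vanishes as $\Ker(p)$ is divisible). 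Hence a splitting $s: G \ra \overline{G}$ exists. The set of all splittings is a torsor over $H^1_{\rm gr}(G;\Ker(p))$.

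Next I would promote $s$ to a $\Z$-equivariant splitting. The $\Z$-action on $G$ and on $\overline{K}$ induces an action on the set of splittings; the failure of a chosen $s$ to be equivariant is measured by a cocycle recording $\tau \cdot s - s$, valued in $H^1_{\rm gr}(G;\Ker(p))$, and the obstruction to correcting it simultaneously for the generator of $\Z$ lives in $H^2(\Z; H^1_{\rm gr}(G;\Ker(p)))$. The clean way to package this is the Lyndon--Hochschild--Serre spectral sequence for $1 \ra G \ra G\rtimes\Z \ra \Z \ra 1$ with coefficients in $\Ker(p)$: the equivariant splittings of \eqref{pull} are governed by classes in $H^*_{\rm gr}(G\rtimes\Z;\Ker(p))$, and the hypothesis $H_3^{\rm gr}(G\rtimes\Z;\Z)=0$ (again combined with divisibility of $\Ker(p)$ via universal coefficients) forces the relevant obstruction group to vanish. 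I would therefore realize the lift as follows: view the desired $\tilde{f}$ as a splitting of the pullback of $p$ along the composite $G\rtimes\Z \ra K\rtimes\Z$, and read off existence from the triviality of the corresponding class in $H^2_{\rm gr}(G\rtimes\Z;\Ker(p))$, which is controlled by $H_2^{\rm gr}$ and $H_3^{\rm gr}$ of $G\rtimes\Z$.

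The main obstacle I anticipate is bookkeeping the equivariance correctly: splitting \eqref{pull} as abstract groups is routine once the cohomology vanishes, but ensuring the splitting commutes with the $\Z$-action requires treating the generator $\tau$ simultaneously with the splitting, and this is precisely where the second hypothesis $H_3^{\rm gr}(G\rtimes\Z;\Z)=0$ must be invoked. I would handle this by phrasing everything in terms of $G\rtimes\Z$ from the start, so that a single vanishing statement about $H^2_{\rm gr}(G\rtimes\Z;\Ker(p))$ simultaneously yields existence of the lift \emph{and} its $\Z$-equivariance, rather than performing a two-stage construction whose second stage requires a delicate descent. The uniqueness-type subtleties (the torsor structure over $H^1$) I would note but set aside, since only existence of $\tilde{f}$ is asserted.
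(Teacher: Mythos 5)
Your proposal follows essentially the same route as the paper's proof: form the pullback central extension $\overline{G}=G\times_K \overline{K}$, kill its class in $H^2_{\rm gr}(G;\Ker(p))$ using $H_2^{\rm gr}(G;\Z)=0$ together with the divisibility of $\Ker(p)\cong H_2(K;\Q)$ via universal coefficients, and then use the Lyndon--Hochschild--Serre spectral sequence of $1\ra G\ra G\rtimes\Z\ra \Z\ra 1$ together with $H_3^{\rm gr}(G\rtimes\Z;\Z)=0$ to obtain a $\Z$-equivariant section $G\ra \overline{G}$, whose composite with $\bar{f}:\overline{G}\ra\overline{K}$ is the desired lift. That is exactly the paper's two-stage argument.

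Two corrections to your middle step, though. First, the obstruction to correcting a splitting $s$ to an equivariant one does \emph{not} live in $H^2(\Z;H^1_{\rm gr}(G;\Ker(p)))$: that group vanishes identically because $\Z$ has cohomological dimension one, so as written your second hypothesis would never be invoked --- a sign the obstruction is misplaced. The splittings form a torsor over $\Hom(G,\Ker(p))$, and the failure of equivariance is a class in the coinvariants $H^1(\Z;\Hom(G,\Ker(p)))=\Hom(G,\Ker(p))_{\Z}$; your own ``clean packaging'' is consistent with this, since once $H^2_{\rm gr}(G;\Ker(p))=0$ the spectral sequence identifies precisely this coinvariant group with $H^2_{\rm gr}(G\rtimes\Z;\Ker(p))$. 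Second, in your one-stage variant over $G\rtimes\Z$, note that $\overline{K}\rtimes\Z\ra K\rtimes\Z$ is in general \emph{not} a central extension: $\Z$ may act nontrivially on $\Ker(p)\cong H_2(K;\Q)$ (it does in the paper's application, where $\Ker(p)=F_k/F_{k+1}\otimes\Q$). So you cannot apply the universal coefficient theorem directly to $H^2_{\rm gr}(G\rtimes\Z;\Ker(p))$; the UCT step must be carried out on $H^{*}_{\rm gr}(G;\Ker(p))$, where the coefficients are genuinely trivial, and only then fed into the spectral sequence --- which is the order of operations in the paper. (For that variant you should also verify that a section over $G\rtimes\Z$ restricts to a \emph{strictly} equivariant lift on $G$: the a priori discrepancy is conjugation by an element of $\Ker(p)$, which is harmless exactly because $\Ker(p)$ is central in $\overline{G}$.) Setting uniqueness aside is fine, since the lemma asserts only existence.
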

\begin{proof}
Consider the pullback, $ \overline{G}$, of $f$ and $p$, and take $ \bar{f}: \overline{G} \ra \overline{K} $. Since $H_2^{\rm gr}(G;\Z) =0$ and $\Ker (p )$ is uniquely divisible, we have $ H^2_{\rm gr}(G; \Ker (p )) \cong 0$ by the universal coefficient theorem. Thus, there is a group isomorphism $\overline{G} \cong G \times \Ker(p) $.

Here, we claim that this isomorphism is $\Z$-equivariant. Considering the Lyndon-Hochschild spectral sequence from $ G \ra G \rtimes \Z \ra \Z $, we have $ H^2_{\rm gr}( G ; \Ker (p))^{\Z} \cong H^3_{\rm gr}( G ; \Ker (p))$, which is zero by assumption. Thus, since we can choose a $\Z$-equivariant section $G \ra G \times \Ker(p) $, 
the action of $\Z$ on $G \times \Ker(p) $ diagonally. 

Hence, the composite of the inclusion $ G \hookrightarrow \overline{G}$ and $\bar{f} : \overline{G}\ra \overline{K} $ is a desired lift.
\end{proof}
\begin{proof}[Proof of Proposition \ref{lem6}]
We will give an inductive proof on $k$. Since the case with $k=2$ is trivial, we may assume $k>2$. Let $\overline{K}$ be $F/F_{k+1} \otimes \Q $ and $ K$ be $F/F_{k} \otimes \Q $.

Fix an isomorphism $ \pi_K \cong [\pi_K , \pi_K ] \rtimes \Z .$ Since $S^3 \setminus K $ is an Eilenberg-MacLane space, so is the covering space $\widetilde{E}_K$. Thus, the group homology of $ \pi_1( \widetilde{E}_K) =[\pi_K, \pi_K]$ is the homology of $\widetilde{E}_K $; we notice $H_2^{\rm gr}( \pi_1( \widetilde{E}_K) ;\Z) \cong H_2( \widetilde{E}_K;\Z) =0$ and $H_3( \pi_K ;\Z) \cong H_3( S^3 \setminus K ;\Z) \cong 0$.

Thanks to Lemma \ref{lem9}, from any $\Z$-equivariant map $[\pi_K, \pi_K] \ra F/F_k \otimes \Q $ as the restriction of $f : \pi_K \ra (F/F_k \otimes \Q ) \rtimes \Z $, we have a lift $[\pi_K, \pi_K] \ra F/F_{k+1} \otimes \Q $. Since this lift is $\Z$-equivariant by Lemma \ref{lem9}, we have a homomorphism $ \widetilde{f} : \pi_K\ra (F/F_3 \otimes \Q ) \rtimes \Z$, as required.

Finally, we show the uniqueness of $\widetilde{f}$. From the proof of Lemma \ref{lem9}, another lift $\widetilde{f}'$ bijectively corresponds to the choice of the inclusion $ G \hookrightarrow \overline{G}$. Thus, we can uniquely find $ \widetilde{f}$ with $ \widetilde{f}(\mathfrak{m})=(0,1) \in (F/F_k \otimes \Q) \rtimes \Z $, as required.
\end{proof}

\begin{proof}[Proof of Proposition \ref{lem7} (I)]
Notice that $\mathfrak{l} \in H_1 ( [\pi_K, \pi_K ];\Z ) $ is zero, because $\mathfrak{l} $ is bounded by a Seifert surface of $\widetilde{E}_K$; thus, $ \widetilde{f} (\mathfrak{l} ) $ lies in $F_2/F_3 \otimes \Q $. Since $ \mathfrak{l} $ commutes with the meridian, $\widetilde{f} (\mathfrak{l} ) \in Z(e ,1)$. By Lemma \ref{lem24}, $\widetilde{f} (\mathfrak{l} ) $ lies in the center, as desired. We show (II) later.
\end{proof}

\begin{proof}[Proof of Proposition \ref{lem8}]
Since the meridian-longitude pair $(\mathfrak{m}, \mathfrak{l})$ commutes, $\mathcal{Q}_K ( \tau_* f) = \widetilde{f}(\mathfrak{m}^{-1} \mathfrak{l}\mathfrak{m} ) = \widetilde{f}( \mathfrak{l} ) = \mathcal{Q}_K ( f) $ as required.

Next, we will show that $\mathcal{Q}_K$ is a quadratic form. Choose a knot diagram $D$. Let $\gamma_0, \gamma_1, \dots, \gamma_m$ be the arcs of $D.$ Here we may assume that the meridian $\mathfrak{m}$ is represented by a loop circulating around $\gamma_0$. Then, given $f \in \mathcal{H}_K$, the Wirtinger presentation implies the correspondence $\gamma_k \mapsto ( a_k ,1) \in F/F_2 \rtimes \Z $ for some $a_k \in F/F_2 $, where we should notice $a_0=0.$
Then, from the Wirtinger presentation, the crossing as in Figure \ref{fig.color} requires the relation $ f( \gamma_k ) = f( \gamma_j )^{-1}f( \gamma_i )f ( \gamma_j) $, which means linear equations with respect to $a_\ell$'s and that $ \mathcal{H}_K$ can be regarded as a space of the linear equations.

In addition, consider the lift $\widetilde{f}$ of $f$. There are $b_k \in F_2/F_3 \otimes \Q$ such that
$$\widetilde{f}( \gamma_k ) =( b_k, a_k ,1 ) \in(F_2/F_3 \otimes \Q) \times (F/F_2 \otimes \Q) \times \Z . $$
Similarly, for the crossing as in Figure \ref{fig.color}, we have the relation $\widetilde{f}( \gamma_k ) = \widetilde{f}( \gamma_j )^{-1}\widetilde{f}( \gamma_i )\widetilde{f}( \gamma_j) $, which equivalently means,
\begin{equation}\label{pp0} ( b_k, a_k ,1 ) = ( \eta ( b_i - b_j ) + b_j + \eta^2 ( a_j \sqcap a_j ) + \eta ( \lambda(-a_j))+ \lambda(a_i - a_j) , \ t( a_i - a_j) +a_j , \ 1). \end{equation}
Here, $\lambda $ and $\eta $ are the functions used in the proof of Proposition \ref{gggg}. All such equations give simultaneous linear equations with respect to $b_k$'s. Proposition \ref{lem6} ensures a unique equation on $b_k$'s in terms of $a_k$'s. Since $\lambda$ is a linear function of $b_k$'s, and $\eta$ is a quadratic function of $a_k$'s from the proof of Proposition \ref{gggg}, so are $ b_k$'s.

The preferred longitude $\mathfrak{l} \in \pi_1(S^3 \setminus K)$ is some product of $ \gamma_k$'s. Thus, the discussion in the above paragraph tells us that the target $\widetilde{f}(\mathfrak{l}) \in (F_2/F_3 \otimes \Q)\rtimes \Z$ can be written as a quadratic function of $f(\gamma_k)$'s. This means a quadratic function on $\mathcal{H}_K.$
\end{proof}

\begin{figure}[htpb]
\begin{center}
\begin{picture}(100,50)
\put(-102,52){\Large $\gamma_i $}
\put(-46,51){\Large $\gamma_j $}
\put(-46,18){\Large $\gamma_k $}
\put(-62,123){\pc{kouten22}{0.2530174}}
\put(46,51){\large $\widetilde{f} (\gamma_i )= ( b_i,a_i,1), $}
\put(46,30){\large $\widetilde{f} (\gamma_j )= ( b_j,a_j,1) ,$}
\put(46,9){\large $\widetilde{f} (\gamma_k )= ( b_k,a_k, 1) \in (F/F_3 \otimes \Q)\rtimes \Z.$}

\end{picture}
\end{center}
\vskip -1.9937pc
\caption{The three arcs around a crossing.
\label{fig.color}}
\end{figure}

\begin{proof}[Proof of Proposition \ref{lem7} (II)]
For two lifts $\tau$ and $\tau'$ of $t$, we have the associated lifts $\widetilde{f}$ and $\widetilde{f}' $ of $f$, respectively; we will show $\widetilde{f} (\mathfrak{l}) = \widetilde{f} '(\mathfrak{l})$. Let us consider the map,
\[ \Upsilon: (F/F_3 \otimes \Q \times \Z)^2 \lra F/F_3 \otimes \Q \times \Z; \ \ \ \ (a,n,b,m) \longmapsto (b^{-1}a, \mathrm{max}(m,n))\]
and the correspondence 
$$g: \gamma_k \mapsto \Upsilon (\widetilde{f}(\gamma_k) , \widetilde{f} '(\gamma_k) ).$$ Then, from \eqref{pp0} we obtain $ g(\gamma_k)=g(\gamma_j)^{-1}g(\gamma_i)g(\gamma_j)$, or equivalently,
$$ (b_k -b_k',0,1)= (\eta (b_i -b_i' +b_j' -b_j)+ b_j- b_j' ,0,1) \in (F_2/F_3 \otimes \Q \times \{0 \})\rtimes \Z$$
Therefore, it can be easily verified that this $g $ gives rise to a homomorphism $ g: \pi_1(S^3 \setminus K) \ra (F_2/F_3 \otimes \Q)\rtimes \Z $. Considering the metabelian quotient of $g$, this $g$ factors through $H_1(\widetilde{E}_K;\Q )\rtimes \Z $. As mentioned above, the class of $ \mathfrak{l} $ in $H_1(\widetilde{E}_K;\Q )\rtimes \Z $ is zero; hence, $g (\mathfrak{l}) =0$. Since $g (\mathfrak{l})= \widetilde{f} (\mathfrak{l}) -\widetilde{f} '(\mathfrak{l}) $ by definition, we have $\widetilde{f} (\mathfrak{l}) = \widetilde{f} '(\mathfrak{l})$, as desired.
\end{proof}

\subsection*{Acknowledgments}
The author expresses his gratitude to Takahiro Kitayama for many helpful discussions.
He also thanks the referee for carefully reading this paper.

\vskip 1pc

\normalsize

DEPARTMENT OF MATHEMATICS TOKYO INSTITUTE OF TECHNOLOGY 2-12-1 OOKAYAMA, MEGURO-KU, TOKYO 152-8551 JAPAN

\end{document}